\theoremstyle{definition}
\newtheorem{defn}{Definition}[section]
\newtheorem{rem}[defn]{Remark}
\theoremstyle{plain}
\newtheorem{thm}[defn]{Theorem}
\newtheorem{lem}[defn]{Lemma}
\numberwithin{equation}{section}
\title[Presentation for mapping class group]{An infinite presentation for the mapping class group of a non-orientable surface}
\author[G.~Omori]{Genki Omori}
\address{
(Genki Omori)
Department of Mathematics,
Tokyo Institute of Technology,
Oh-okayama, Meguro, Tokyo 152-8551, Japan
}
\email{omori.g.aa@m.titech.ac.jp}
\date{\today}
\begin{document}
\maketitle
\begin{abstract}
We give an infinite presentation for the mapping class group of a non-orientable surface. The generating set consists of all Dehn twists and all crosscap pushing maps along simple loops. 
\end{abstract}

\section{Introduction}

Let $\Sigma _{g,n}$ be a compact connected orientable surface of genus $g\geq 0$ with $n\geq 0$ boundary components. The {\it mapping class group} $\mathcal{M}(\Sigma _{g,n})$ of $\Sigma _{g,n}$ is the group of isotopy classes of orientation preserving self-diffeomorphisms on $\Sigma _{g,n}$ fixing the boundary pointwise. A finite presentation for $\mathcal{M}(\Sigma _{g,n})$ was given by Hatcher-Thurston~\cite{Hatcher-Thurston}, Wajnryb~\cite{Wajnryb}, Harer~\cite{Harer}, Gervais~\cite{Gervais2} and Labru\`ere-Paris~\cite{Labruere-Paris}. Gervais~\cite{Gervais} obtained an infinite presentation for $\mathcal{M}(\Sigma _{g,n})$ by using Wajnryb's finite presentation for $\mathcal{M}(\Sigma _{g,n})$, and Luo~\cite{Luo} rewrote Gervais' presentation into a simpler infinite presentation (see Theorem~\ref{pres_Gervais}).

Let $N_{g,n}$ be a compact connected non-orientable surface of genus $g\geq 1$ with $n\geq 0$ boundary components. The surface $N_g=N_{g,0}$ is a connected sum of $g$ real projective planes. The mapping class group $\mathcal{M}(N_{g,n})$ of $N_{g,n}$ is the group of isotopy classes of self-diffeomorphisms on $N_{g,n}$ fixing the boundary pointwise. For $g\geq 2$ and $n\in \{ 0,1\}$, a finite presentation for $\mathcal{M}(N_{g,n})$ was given by Lickorish~\cite{Lickorish1}, Birman-Chillingworth~\cite{Birman-Chillingworth}, Stukow~\cite{Stukow1} and Paris-Szepietowski~\cite{Paris-Szepietowski}. Note that $\mathcal{M}(N_1)$ and $\mathcal{M}(N_{1,1})$ are trivial (see \cite[Theorem~3.4]{Epstein}) and $\mathcal{M}(N_2)$ is finite (see \cite[Lemma~5]{Lickorish1}). Stukow~\cite{Stukow2} rewrote Paris-Szepietowski's presentation into a finite presentation with Dehn twists and a ``Y-homeomorphism'' as generators (see Theorem~\ref{thm_Stukow}).

In this paper, we give a simple infinite presentation for $\mathcal{M}(N_{g,n})$ (Theorem~\ref{main-thm}) when $g\geq 1$ and $n\in \{ 0,1\}$. The generating set consists of all Dehn twits and all ``crosscap pushing maps'' along simple loops. We review the crosscap pushing map in Section~\ref{Preliminaries}. We prove Theorem~\ref{main-thm} by applying Gervais' argument to Stukow's finite presentation.

\section{Preliminaries}\label{Preliminaries}

\subsection{Relations among Dehn twists and Gervais' presentation}
Let $S$ be either $N_{g,n}$ or $\Sigma _{g,n}$. We denote by $\mathcal{N}_{S}(A)$ a regular neighborhood of a subset $A$ in $S$ . 
For every simple closed curve $c$ on $S$, we choose an orientation of $c$ and fix it throughout this paper. However, for simple closed curves $c_1$, $c_2$ on $S$ and $f\in \mathcal{M}(S)$, $f(c_1)=c_2$ means $f(c_1)$ is isotopic to $c_2$ or the inverse curve of $c_2$. If $S$ is a non-orientable surface, we also fix an orientation of $\mathcal{N}_{S}(c)$ for each two-sided simple closed curve $c$. For a two-sided simple closed curve $c$ on $S$, denote by $t_c$ the right-handed Dehn twist along $c$ on $S$. In particular, for a given explicit two-sided simple closed curve, an arrow on a side of the simple closed curve indicates the direction of the Dehn twist (see Figure~\ref{dehntwist}).

\begin{figure}[h]
\includegraphics[scale=0.65]{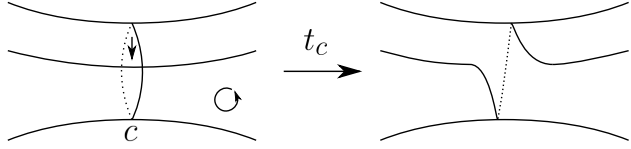}
\caption{The right-handed Dehn twist $t_c$ along a two-sided simple closed curve $c$ on $S$.}\label{dehntwist}
\end{figure}

Recall the following relations on $\mathcal{M}(S)$ among Dehn twists along two-sided simple closed curves on $S$.

\begin{lem}\label{trivial}
For a two-sided simple closed curve $c$ on $S$ which bounds a disk or a M\"{o}bius band in $S$, we have $t_c=1$ on $\mathcal{M}(S)$.
\end{lem}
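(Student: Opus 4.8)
The plan is to treat both cases uniformly: I realize $t_c$ as a mapping class supported in a subsurface $F\subset S$ whose mapping class group relative to the boundary is already known to be trivial, and then extend a nullisotopy on $F$ by the identity on $S\setminus F$.

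First I recall that $t_c$ is supported in a regular neighborhood $\mathcal{N}_S(c)$, an annulus with core $c$, and may be taken to be the identity near both boundary circles of this annulus. Let $F_0$ denote the disk or Möbius band bounded by $c$, and set $F=F_0\cup \mathcal{N}_S(c)$, so that $F$ enlarges $F_0$ by the outer collar of $c$. Then $F$ is again a disk (resp.\ a Möbius band) with $c$ lying in its interior, and since $t_c$ is the identity near $\partial F$, it restricts to a self-diffeomorphism of $F$ fixing $\partial F$ pointwise. Hence $t_c$ determines a class $[t_c|_F]\in \mathcal{M}(F)$.

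Next I invoke the triviality of $\mathcal{M}(F)$. When $F$ is a disk this is the Alexander trick: every self-diffeomorphism of $D^2$ fixing $\partial D^2$ pointwise is isotopic to the identity rel boundary. When $F$ is a Möbius band, $F\cong N_{1,1}$, and $\mathcal{M}(N_{1,1})$ is trivial by \cite[Theorem~3.4]{Epstein}, as recalled in the introduction. In either case $[t_c|_F]=1$, so there is an isotopy from $t_c|_F$ to $\id_F$ fixing $\partial F$ throughout; extending it by the identity on $S\setminus F$ gives an isotopy from $t_c$ to $\id_S$, and therefore $t_c=1$ in $\mathcal{M}(S)$.

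I expect the only real content to lie in the Möbius band case, where the conclusion rests on the fact that $\mathcal{M}(N_{1,1})=1$; geometrically this reflects that the one-sided core of $F$ allows one to drag the two-sided curve $c$ once around the core and thereby unwind the twist. The disk case is routine. One small point I would verify is that $c$ is boundary-parallel in the enlarged $F$: being two-sided and bounding a sub-disk (resp.\ sub-Möbius band) of $F$, it cannot be the one-sided core, and the boundary-parallel class is the only other isotopy class of essential simple closed curve in $F$.
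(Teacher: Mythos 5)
Your argument is correct and is the standard one: localize $t_c$ to the disk or M\"{o}bius band $F$ enlarged by a collar of $c$, invoke the triviality of $\mathcal{M}(D^2)$ (Alexander) respectively $\mathcal{M}(N_{1,1})$ (Epstein, which the paper itself cites), and extend the nullisotopy rel $\partial F$ by the identity. The paper states this lemma without proof as a recalled fact, so there is nothing to compare against; your write-up supplies exactly the expected justification, and the closing remark about $c$ being boundary-parallel in $F$ is not actually needed for the argument.
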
 

\begin{lem}[The braid relation (i)]\label{braid1}
For  a two-sided simple closed curve $c$ on $S$ and $f\in \mathcal{M}(S)$, we have 
\[
t_{f(c)}^{\varepsilon _{f(c)}}=ft_cf^{-1},
\]
where $\varepsilon _{f(c)}=1$ if the restriction $f|_{\mathcal{N}_{S}(c)}:\mathcal{N}_{S}(c)\rightarrow \mathcal{N}_{S}(f(c))$ is orientation preserving and $\varepsilon _{f(c)}=-1$ if the restriction $f|_{\mathcal{N}_{S}(c)}:\mathcal{N}_{S}(c)\rightarrow \mathcal{N}_{S}(f(c))$ is orientation reversing.
\end{lem}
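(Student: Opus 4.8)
The plan is to reduce the identity to a purely local computation in an annular neighborhood, exploiting the fact that a Dehn twist is supported in such a neighborhood and that conjugation simply transports the support.

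First I would fix a diffeomorphism $\phi$ representing the mapping class $f$ and a diffeomorphism $T$ representing $t_c$ that is supported in the annulus $A:=\mathcal{N}_{S}(c)$ and is the standard right-handed twist there with respect to the fixed orientation of $A$. Then $\phi T\phi^{-1}$ is supported in $\phi(A)$, which is a regular neighborhood of $\phi(c)$; by uniqueness of regular neighborhoods up to isotopy, $\phi(A)$ is isotopic to $\mathcal{N}_{S}(f(c))$, so $\phi T\phi^{-1}$ represents \emph{some} Dehn twist along $f(c)$. The only remaining question is its handedness, which is exactly what the exponent $\varepsilon_{f(c)}$ records.

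Next I would pin down the handedness through the orientations. The diffeomorphism $\phi$ carries the oriented annulus $(A,\mathrm{or}_A)$ to $(\phi(A),\phi_*\mathrm{or}_A)$, and $\phi T\phi^{-1}$ is precisely the right-handed twist on $\phi(A)$ with respect to the pushed-forward orientation $\phi_*\mathrm{or}_A$. Comparing $\phi_*\mathrm{or}_A$ with the fixed orientation of $\mathcal{N}_{S}(f(c))$ finishes the argument: if they agree, that is if $f|_{\mathcal{N}_{S}(c)}$ is orientation preserving ($\varepsilon_{f(c)}=1$), then $\phi T\phi^{-1}=t_{f(c)}$; if they disagree ($\varepsilon_{f(c)}=-1$), then $\phi T\phi^{-1}=t_{f(c)}^{-1}$, since reversing the orientation of the annulus converts a right-handed twist into a left-handed one. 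In both cases this reads $f t_c f^{-1}=t_{f(c)}^{\varepsilon_{f(c)}}$.

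The main obstacle is making the handedness bookkeeping rigorous, in particular verifying that conjugation by an orientation-reversing map inverts the twist, since on a non-orientable $S$ there is no global orientation and one must work only with the chosen local orientations of annular neighborhoods. I would handle this by passing to an explicit model $S^1\times[0,1]$ with a fixed orientation, writing the right-handed twist in coordinates, and checking directly that conjugating it by an orientation-reversing diffeomorphism of the annulus yields its inverse. Everything else, namely independence of the choices of $\phi$ and $T$ up to isotopy and the uniqueness of regular neighborhoods, is standard.
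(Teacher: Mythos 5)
Your argument is correct and is the standard one: conjugation transports the support annulus, uniqueness of regular neighborhoods identifies it with $\mathcal{N}_{S}(f(c))$, and the sign is read off by comparing the pushed-forward orientation with the fixed one, with the orientation-reversing case checked in the local model $S^1\times[0,1]$. The paper states this lemma without proof as a recalled standard fact, so there is nothing to diverge from; your write-up supplies exactly the argument being implicitly invoked.
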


When $f$ in Lemma~\ref{braid1} is a Dehn twist $t_d$ along a two-sided simple closed curve $d$ and the geometric intersection number $|c\cap d|$ of $c$ and $d$ is $m$, we denote by $T_m$ the braid relation.

Let $c_1$, $c_2$, $\dots $, $c_k$ be two-sided simple closed curves on $S$. The sequence $c_1$, $c_2$, $\dots $, $c_k$ of simple closed curves on $S$ is a {\it k-chain on $S$} if $c_1$, $c_2$, $\dots $, $c_k$ satisfy $|c_i\cap c_{i+1}|=1$ for each $i=1$, $2$, $\dots $, $k-1$ and $|c_i\cap c_j|=0$ for $|j-i|>1$.

\begin{lem}[The $k$-chain relation]\label{chain}
Let $c_1$, $c_2$, $\dots $, $c_k$ be a $k$-chain on $S$ and let $\delta _1$, $\delta _2$ (resp. $\delta $) be distinct boundary components (resp. the boundary component) of $\mathcal{N}_S(c_1\cup c_2\cup \cdots \cup c_k)$ when $k$ is odd (resp. even). Then we have
\begin{eqnarray*}
(t_{c_1}^{\varepsilon _{c_1}}t_{c_2}^{\varepsilon _{c_2}}\cdots t_{c_k}^{\varepsilon _{c_k}})^{k+1} &=& t_{\delta _1}^{\varepsilon _{\delta _1}}t_{\delta _2}^{\varepsilon _{\delta _2}} \hspace{0.5cm}\text{when}\ k\text{ is odd},\\
(t_{c_1}^{\varepsilon _{c_1}}t_{c_2}^{\varepsilon _{c_2}}\cdots t_{c_k}^{\varepsilon _{c_k}})^{2k+2} &=& t_\delta ^{\varepsilon _\delta } \hspace{0.5cm}\text{when}\ k\text{ is even},
\end{eqnarray*}
where $\varepsilon _{c_1}$, $\varepsilon _{c_2}$, $\dots $, $\varepsilon _{c_k}$, $\varepsilon _{\delta _1}$, $\varepsilon _{\delta _2}$ and $\varepsilon $ are $1$ or $-1$, and $t_{c_1}^{\varepsilon _{c_1}}$, $t_{c_2}^{\varepsilon _{c_2}}$, $\dots $, $t_{c_k}^{\varepsilon _{c_k}}$, $t_{\delta _1}^{\varepsilon _{\delta _1}}$ and $t_{\delta _2}^{\varepsilon _{\delta _2}}$ (resp. $t_\delta ^{\varepsilon _\delta }$) are right-handed Dehn twists for some orientation of $\mathcal{N}_S(c_1\cup c_2\cup \cdots \cup c_k)$.
\end{lem}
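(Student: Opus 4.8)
The plan is to reduce the statement to the classical chain relation on an orientable surface. The key geometric observation is that the regular neighborhood $\mathcal{N}:=\mathcal{N}_S(c_1\cup c_2\cup \cdots \cup c_k)$ of a $k$-chain is always an \emph{orientable} subsurface of $S$, even when $S$ itself is non-orientable. First I would verify this by building $\mathcal{N}$ inductively: $\mathcal{N}_S(c_1)$ is an annulus, and attaching each successive annular neighborhood $\mathcal{N}_S(c_{i+1})$ to the previous neighborhood along the single transverse intersection point $c_i\cap c_{i+1}$ amounts to plumbing annuli along the path graph $c_1-c_2-\cdots -c_k$. Since a path has no cycles, no orientation-reversing loop can arise, and the plumbing yields an orientable surface. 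An Euler characteristic count ($\chi(\mathcal{N})=-(k-1)$) then identifies $\mathcal{N}$ with $\Sigma_{(k-1)/2,2}$ when $k$ is odd and with $\Sigma_{k/2,1}$ when $k$ is even, which in particular accounts for the two boundary components $\delta_1,\delta_2$ in the odd case and the single boundary component $\delta$ in the even case.

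Next I would fix an orientation of $\mathcal{N}$ and work inside $\mathcal{M}(\mathcal{N})$. With respect to this orientation, each $c_i$ and each boundary component of $\mathcal{N}$ has a well-defined right-handed Dehn twist, which I denote $T_{c_i}$ (and $T_{\delta_1},T_{\delta_2}$, resp. $T_\delta$) to distinguish it from the globally fixed twist on $S$. In the oriented surface $\mathcal{N}$ the desired identity is precisely the classical $k$-chain relation for orientable surfaces, namely $(T_{c_1}\cdots T_{c_k})^{k+1}=T_{\delta_1}T_{\delta_2}$ for $k$ odd and $(T_{c_1}\cdots T_{c_k})^{2k+2}=T_\delta$ for $k$ even. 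I would invoke this as a known relation holding in the mapping class group of an orientable surface.

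To transport the relation to $\mathcal{M}(S)$ I would use the homomorphism $\mathcal{M}(\mathcal{N})\to \mathcal{M}(S)$ induced by the inclusion, defined by extending a mapping class of $\mathcal{N}$ by the identity on $S\setminus \mathcal{N}$. Under this map each right-handed twist $T_{c_i}$, and likewise each boundary twist, is sent to a power $\pm 1$ of the corresponding Dehn twist in $\mathcal{M}(S)$, so the image of the chain relation has exactly the stated form. The only remaining bookkeeping is the sign: the globally fixed orientation of $\mathcal{N}_S(c_i)$ used to define $t_{c_i}$ need not agree with the orientation that $\mathcal{N}$ induces on $\mathcal{N}_S(c_i)$, and by the convention of Lemma~\ref{braid1} the image of $T_{c_i}$ equals $t_{c_i}^{\varepsilon_{c_i}}$ with $\varepsilon_{c_i}=\pm 1$ recording this discrepancy, and similarly $t_{\delta_j}^{\varepsilon_{\delta_j}}$ (resp. $t_\delta^{\varepsilon_\delta}$) for the boundary.

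The step I expect to require the most care is this final sign bookkeeping: one must check that a single orientation of $\mathcal{N}$ simultaneously realizes all the twists $t_{c_i}^{\varepsilon_{c_i}}$ and the boundary twists as right-handed twists on $\mathcal{N}$, exactly as the statement asserts. The orientability of $\mathcal{N}$ established in the first step is precisely what makes such a coherent choice of orientation possible, so the genuine content of the proof is packaging the classical orientable chain relation together with a careful comparison of the local orientation conventions on $S$ and on $\mathcal{N}$.
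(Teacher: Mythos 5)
Your argument is correct, and it is essentially the route the paper takes: the paper states this lemma without proof as a recalled classical fact, and the mechanism it relies on elsewhere (the homomorphism $\iota _\ast$ induced by an embedded orientable subsurface in Remark~\ref{orientable}, used precisely to import chain relations into $G$) is exactly your reduction to the classical chain relation on the orientable regular neighborhood $\mathcal{N}_S(c_1\cup \cdots \cup c_k)$, with the signs $\varepsilon$ recording the mismatch between the globally fixed orientations of the $\mathcal{N}_S(c_i)$ and the orientation of the neighborhood. Your observation that the neighborhood of a chain of two-sided curves is orientable (because the nerve of the plumbing is a tree) is the one point genuinely needing verification in the non-orientable setting, and your justification of it is sound.
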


\begin{lem}[The lantern relation]\label{lantern}
Let $\Sigma $ be a subsurface of $S$ which is diffeomorphic to $\Sigma _{0,4}$ and let $\delta _{12}$, $\delta _{23}$, $\delta _{13}$, $\delta _1$, $\delta _2$, $\delta _3$ and $\delta _4$ be simple closed curves on $\Sigma $ as in Figure~\ref{lantern1}. Then we have
\[
t_{\delta _{12}}^{\varepsilon_{\delta _{12}}}t_{\delta _{23}}^{\varepsilon_{\delta _{23}}}t_{\delta _{13}}^{\varepsilon_{\delta _{13}}}=t_{\delta _1}^{\varepsilon_{\delta _1}}t_{\delta _2}^{\varepsilon_{\delta _2}}t_{\delta _3}^{\varepsilon_{\delta _3}}t_{\delta _4}^{\varepsilon_{\delta _4}},
\]
where $\varepsilon _{\delta _{12}}$, $\varepsilon _{\delta _{23}}$, $\varepsilon _{\delta _{13}}$, $\varepsilon_{\delta _1}$, $\varepsilon_{\delta _2}$, $\varepsilon_{\delta _3}$ and $\varepsilon_{\delta _4}$ are $1$ or $-1$, and $t_{\delta _{12}}^{\varepsilon_{\delta _{12}}}$, $t_{\delta _{23}}^{\varepsilon_{\delta _{23}}}$, $t_{\delta _{13}}^{\varepsilon_{\delta _{13}}}$, $t_{\delta _1}^{\varepsilon_{\delta _1}}$, $t_{\delta _2}^{\varepsilon _{\delta _2}}$, $t_{\delta _3}^{\varepsilon _{\delta _3}}$ and $t_{\delta _4}^{\varepsilon _{\delta _4}}$ are right-handed Dehn twists for some orientation of $\Sigma $.
\end{lem}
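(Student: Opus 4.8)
The plan is to reduce the statement to the classical lantern relation on the \emph{oriented} four-holed sphere and then account for the signs $\varepsilon$ by a bookkeeping argument. Since $\Sigma $ is diffeomorphic to $\Sigma _{0,4}$, it is planar and hence orientable; so I first fix an orientation on $\Sigma $ once and for all. With respect to this orientation, each of the seven curves $\delta _{12}$, $\delta _{23}$, $\delta _{13}$, $\delta _1$, $\dots $, $\delta _4$ carries a well-defined right-handed Dehn twist, which I temporarily denote $T_{\delta }$. The lantern relation on an oriented four-holed sphere (a standard relation; see Johnson or Farb--Margalit) then reads
\[
T_{\delta _{12}}T_{\delta _{23}}T_{\delta _{13}}=T_{\delta _1}T_{\delta _2}T_{\delta _3}T_{\delta _4}
\]
as an identity in $\mathcal{M}(\Sigma )$. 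If one prefers to avoid citing it, this can be established directly by the Alexander method: both products fix $\partial \Sigma $ and carry a fixed filling collection of arcs in $\Sigma $ to the same arcs up to isotopy, hence represent the same mapping class.

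Next I push this identity forward along the inclusion $\Sigma \hookrightarrow S$. Each of the seven twists is supported in a regular neighborhood contained in $\Sigma $, so a diffeomorphism representing $T_{\delta }$ extends by the identity on $S\setminus \Sigma $ to a self-diffeomorphism of $S$. Extension by the identity respects composition, so the displayed identity remains valid in $\mathcal{M}(S)$ once each $T_{\delta }$ is read as the corresponding twist in $S$. Here it is used that every $\delta $ is two-sided in $S$: since $\Sigma $ is orientable, each $\delta $ has an annular regular neighborhood in $\Sigma $, and regular neighborhoods are intrinsic, so the neighborhood in $S$ is the same annulus; thus $t_{\delta }$ is defined.

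Finally I translate each $T_{\delta }$ into the paper's notation $t_{\delta }^{\varepsilon _{\delta }}$. By convention $t_{\delta }$ is right-handed for the globally fixed orientation of $\mathcal{N}_S(\delta )$, whereas $T_{\delta }$ is right-handed for the orientation that the fixed orientation of $\Sigma $ induces on the same neighborhood. These two orientations of $\mathcal{N}_S(\delta )$ either agree or are opposite; setting $\varepsilon _{\delta }=1$ in the first case and $\varepsilon _{\delta }=-1$ in the second gives $T_{\delta }=t_{\delta }^{\varepsilon _{\delta }}$ for each curve. Substituting these seven equalities into the identity of the previous paragraph yields exactly the asserted relation.

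The only genuinely new content beyond the orientable case is this last bookkeeping step, so the point requiring care is \emph{consistency}: all seven signs $\varepsilon _{\delta }$ must be computed against one and the same chosen orientation of $\Sigma $, and the classical relation must be applied with respect to that same orientation. Once a single orientation of $\Sigma $ is fixed, no further obstacle arises, since each twist occurring in the relation is supported inside $\Sigma $ and the orientation signs simply record the discrepancy with the global convention on $S$.
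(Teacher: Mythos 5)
Your proposal is correct and is exactly the standard justification the paper has in mind: the lemma is recalled without proof as a classical relation, and the phrase ``for some orientation of $\Sigma$'' in its statement is precisely your sign-bookkeeping step, reducing to the oriented lantern relation on $\Sigma_{0,4}$ and pushing it forward along the inclusion $\Sigma\hookrightarrow S$. No discrepancy with the paper's (implicit) argument.
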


\begin{figure}[h]
\includegraphics[scale=0.75]{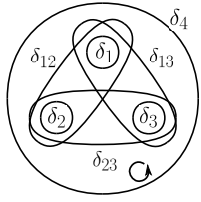}
\caption{Simple closed curves $\delta _{12}$, $\delta _{23}$, $\delta _{13}$, $\delta _1$, $\delta _2$, $\delta _3$ and $\delta _4$ on $\Sigma $.}\label{lantern1}
\end{figure}

Luo's presentation for $\mathcal{M}(\Sigma _{g,n})$, which is an improvement of Gervais' one, is as follows.

\begin{thm}[\cite{Gervais}, \cite{Luo}]\label{pres_Gervais}
For $g\geq 0$ and $n\geq 0$, $\mathcal{M}(\Sigma _{g,n})$ has the following presentation:

generators: $\{ t_c \mid c:\text{ s.c.c. on }\Sigma _{g,n} \}$.

relations:
\begin{enumerate}
 \item[(0$^\prime $)] $t_c=1$ when $c$ bounds a disk in $\Sigma _{g,n}$,
 \item[(I$^\prime $)] All the braid relations $T_0$ and $T_1$,
 \item[(I\hspace{-0.06cm}I)] All the 2-chain relations,
 \item[(I\hspace{-0.06cm}I\hspace{-0.06cm}I)] All the lantern relations.
\end{enumerate} 
\end{thm}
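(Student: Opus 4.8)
The plan is to show that the group $G$ defined by the presentation in the statement is isomorphic to $\mathcal{M}(\Sigma _{g,n})$ via the evident map on generators. Every relation (0$'$)--(I\hspace{-0.06cm}I\hspace{-0.06cm}I) is an instance of Lemmas~\ref{trivial}, \ref{braid1}, \ref{chain} and \ref{lantern}, all of which hold in $\mathcal{M}(\Sigma _{g,n})$, so there is a well-defined homomorphism $\Phi\colon G\to \mathcal{M}(\Sigma _{g,n})$ sending each generator $t_c$ to the corresponding Dehn twist; since the Dehn twists generate $\mathcal{M}(\Sigma _{g,n})$ by the Dehn--Lickorish theorem, $\Phi$ is surjective. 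The whole content is thus the injectivity of $\Phi$, and I would obtain it by building a one-sided inverse out of a known \emph{finite} presentation.

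Concretely, I would fix Wajnryb's finite presentation~\cite{Wajnryb} of $\mathcal{M}(\Sigma _{g,n})$, whose generators $s_1,\dots ,s_p$ are a fixed finite list of Dehn twists (along a standard chain and a few auxiliary curves) and whose relators $w_1,\dots ,w_q$ are explicit words in the $s_i$. Mapping $s_i$ to the corresponding generator $t_{c_i}$ of $G$ defines a candidate homomorphism $\psi \colon \mathcal{M}(\Sigma _{g,n})\to G$. The first main step (A) is to verify that each Wajnryb relator $w_j$ becomes trivial in $G$, i.e.\ that every defining relation of Wajnryb's presentation is a consequence of (0$'$), (I$'$), (I\hspace{-0.06cm}I) and (I\hspace{-0.06cm}I\hspace{-0.06cm}I). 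Granting this, $\psi$ is well defined and $\Phi \circ \psi =\id$ on generators, so $\psi$ is injective; it then remains to prove that $\psi$ is surjective.

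The surjectivity of $\psi$ is the second main step (B): I must show that in $G$ every generator $t_c$ lies in the subgroup $H$ generated by the $t_{c_i}$. For this I would first upgrade the braid relations $T_0$ and $T_1$ in (I$'$) to the general conjugation relation $t_{f(c)}=f t_c f^{-1}$ of Lemma~\ref{braid1} \emph{inside} $G$, for $f$ any product of the standard generators: this follows by induction on the length of $f$ from the single-twist case, provided each intermediate twist meets the relevant curve in $0$ or $1$ points. One then uses that any simple closed curve $c$ can be joined to a standard curve $c_i$ by a finite sequence of curves in which consecutive members meet once (connectivity of the relevant curve graph), possibly after also invoking (I\hspace{-0.06cm}I) and (I\hspace{-0.06cm}I\hspace{-0.06cm}I) to handle separating curves, so that repeated application of the derived conjugation relation expresses $t_c$ in terms of the $t_{c_i}$ within $G$. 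Once $\psi$ is both injective and surjective it is an isomorphism with inverse $\Phi$, which is the assertion.

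I expect step (B) to be the main obstacle. The difficulty is that (I$'$) supplies braid relations only for geometric intersection numbers $0$ and $1$, whereas realizing an arbitrary conjugation $t_{f(c)}=f t_c f^{-1}$ naturally produces pairs of curves meeting in $\geq 2$ points, whose braid relation $T_m$ is \emph{not} listed. The heart of the argument --- which is exactly Luo's refinement of Gervais~\cite{Gervais,Luo} --- is that each higher braid relation $T_m$ with $m\geq 2$ is itself derivable from $T_0$, $T_1$ and the lantern relations (I\hspace{-0.06cm}I\hspace{-0.06cm}I), the lantern relation serving to factor a single twist into twists along curves of smaller intersection number, while the $2$-chain relations (I\hspace{-0.06cm}I) close the remaining gap with Wajnryb's relators in step (A). Carrying out these derivations, together with the bookkeeping of orientations needed so that all exponents in the chain and lantern relations come out correctly, is the technical core of the proof.
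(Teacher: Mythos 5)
The paper gives no proof of Theorem~\ref{pres_Gervais} --- it is quoted from \cite{Gervais} and \cite{Luo} --- but your outline is essentially the strategy of those references (build the one-sided inverse out of Wajnryb's finite presentation \cite{Wajnryb}; Luo's contribution is exactly the derivation of the higher braid relations $T_m$, $m\geq 2$, from $T_0$, $T_1$ and the lantern relations), and it is also the same two-step scheme the paper itself applies to prove Theorem~\ref{main-thm} (well-definedness of a map out of a finite presentation via Lemma~\ref{combinatorial}, then surjectivity by lifting each generator). As an outline your proposal is correct, with the parts you defer --- verifying Wajnryb's relators in the infinitely presented group and carrying out the $T_m$ reduction --- being precisely the technical content supplied by \cite{Gervais} and \cite{Luo}.
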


\subsection{Relations among the crosscap pushing maps and Dehn twists}
Let $\mu $ be a one-sided simple closed curve on $N_{g,n}$ and let $\alpha $ be a simple closed curve on $N_{g,n}$ such that $\mu $ and $\alpha $ intersect transversely at one point. Recall that $\alpha $ is oriented. For these simple closed curves $\mu$ and $\alpha $, we denote by $Y_{\mu , \alpha }$ a self-diffeomorphism on $N_{g,n}$ which is described as the result of pushing the M\"{o}bius band $\mathcal{N}_{N_{g,n}}(\mu )$ once along $\alpha $.  We call $Y_{\mu , \alpha }$ a {\it crosscap pushing map}. In particular, if $\alpha $ is two-sided, we call $Y_{\mu , \alpha }$ a {\it Y-homeomorphism} (or {\it crosscap slide}), where a {\it crosscap} means a M\"{o}bius band in the interior of a surface. The Y-homeomorphism was originally defined by Lickorish~\cite{Lickorish1}. We have the following fundamental relation on $\mathcal{M}(N_{g,n})$ and we also call the relation the {\it braid relation}.

\begin{lem}[The braid relation (ii)]\label{braid2}
Let $\mu $ be a one-sided simple closed curve on $N_{g,n}$ and let $\alpha $ be a simple closed curve on $N_{g,n}$ such that $\mu $ and $\alpha $ intersect transversely at one point. For $f\in \mathcal{M}(N_{g,n})$, we have
\[
Y_{f(\mu ),f(\alpha )}^{\varepsilon _{f(\alpha )}}=fY_{\mu ,\alpha }f^{-1},
\]
where $\varepsilon _{f(\alpha )}=1$ if the fixed orientation of $f(\alpha )$ coincides with that induced by the orientation of $\alpha $, and $\varepsilon _{f(\alpha )}=-1$ otherwise.
\end{lem}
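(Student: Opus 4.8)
The plan is to prove this by the \emph{change of coordinates} principle, exactly as one proves the braid relation~(i) of Lemma~\ref{braid1} for Dehn twists, the only new feature being that the resulting sign is governed by the pushing direction along $\alpha$ rather than by an orientation of a twisting annulus. First I would recall the precise description of $Y_{\mu ,\alpha }$ as a mapping class supported in a regular neighborhood $K:=\mathcal{N}_{N_{g,n}}(\mu \cup \alpha )$. Since $\mu $ and $\alpha $ meet transversely at a single point, $K$ is a compact subsurface containing the M\"{o}bius band $M:=\mathcal{N}_{N_{g,n}}(\mu )$, and $Y_{\mu ,\alpha }$ is, up to isotopy, the identity outside $K$; inside $K$ it is the diffeomorphism obtained by dragging $M$ once around $\alpha $ in the direction of the fixed orientation of $\alpha $ and returning it to its starting position. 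The two facts I will use are: (a) $Y_{\mu ,\alpha }$ is \emph{entirely determined}, as an isotopy class, by the isotopy class of the pair $(\mu ,\alpha )$ together with the orientation of $\alpha $; and (b) reversing the orientation of $\alpha $ reverses the pushing direction, so that $Y_{\mu ,\bar\alpha }=Y_{\mu ,\alpha }^{-1}$, where $\bar\alpha $ is $\alpha $ with the opposite orientation.

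Next I would run the naturality argument. For $f\in \mathcal{M}(N_{g,n})$ fix a representing diffeomorphism, still denoted $f$. Then $f$ carries $M$ to a M\"{o}bius band isotopic to $\mathcal{N}_{N_{g,n}}(f(\mu ))$ and $K$ to a neighborhood isotopic to $\mathcal{N}_{N_{g,n}}(f(\mu )\cup f(\alpha ))$. Conjugating the support and the defining isotopy of $Y_{\mu ,\alpha }$ by $f$, the class $fY_{\mu ,\alpha }f^{-1}$ is supported in $f(K)$ and is precisely the operation of dragging the crosscap $f(M)$ once around $f(\alpha )$, with pushing direction the one that $f$ induces from the chosen orientation of $\alpha $, namely $f_*$ of that orientation. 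By fact~(a) this \emph{is} a crosscap pushing map along $f(\alpha )$.

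Finally I would reconcile the two orientations on $f(\alpha )$. The map $Y_{f(\mu ),f(\alpha )}$ is defined using the \emph{fixed} orientation of $f(\alpha )$. If that orientation agrees with the one induced by $f$ (the case $\varepsilon _{f(\alpha )}=1$), then $fY_{\mu ,\alpha }f^{-1}=Y_{f(\mu ),f(\alpha )}$. If they disagree ($\varepsilon _{f(\alpha )}=-1$), then by fact~(b) the $f$-induced push is the inverse of the one defining $Y_{f(\mu ),f(\alpha )}$, so $fY_{\mu ,\alpha }f^{-1}=Y_{f(\mu ),f(\alpha )}^{-1}$. In both cases this equals $Y_{f(\mu ),f(\alpha )}^{\varepsilon _{f(\alpha )}}$, as claimed. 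I expect the main subtlety to be establishing fact~(a) cleanly: one must check that the crosscap push carries \emph{no} extra sign from an orientation of the M\"{o}bius band $M$ (it cannot, since $M$ is non-orientable), so that the \emph{only} orientation datum entering the construction is the direction of $\alpha $. This is exactly the reason the exponent in the statement depends on $f(\alpha )$ alone, and not, as in Lemma~\ref{braid1}, on the orientation behaviour of $f$ on a neighborhood of the pushed curve.
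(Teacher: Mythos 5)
Your argument is correct, and since the paper states Lemma~\ref{braid2} without proof (it is recalled as a fundamental relation), the change-of-coordinates argument you give is exactly the justification one is expected to supply. The two facts you isolate are the right ones: the crosscap push depends only on the isotopy class of the pair $(\mu ,\alpha )$ together with the direction of the push, and reversing the orientation of $\alpha $ inverts the map; your closing remark correctly identifies why no analogue of the sign $\varepsilon _{f(c)}$ of Lemma~\ref{braid1} appears here, namely that the M\"{o}bius band $\mathcal{N}_{N_{g,n}}(\mu )$ carries no orientation that could contribute one. An alternative route, using machinery the paper sets up immediately afterwards, is to write $Y_{\mu ,\alpha }=\psi _{x_\mu }(\overline{\alpha })$ as in Lemma~\ref{pushing1} and deduce the conjugation formula from the naturality of the point-pushing map $j_{x_\mu }$ under the action of $\mathcal{M}(N_{g-1,n},x_\mu )$ on $\pi _1(N_{g-1,n},x_\mu )$; this packages your facts~(a) and~(b) into the functoriality of the Birman exact sequence, at the cost of first checking that $f$ can be represented compatibly with the blowup. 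Either way the content is the same, and your version is self-contained.
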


We describe crosscap pushing maps from a different point of view. Let $e:D^\prime \hookrightarrow {\rm int}S$ be a smooth embedding of the unit disk $D^\prime \subset \mathbb C$. Put $D:=e(D^\prime )$. Let $S^\prime $ be the surface obtained from $S-{\rm int}D$ by the identification of antipodal points of $\partial D$. We call the manipulation that gives $S^\prime $ from $S$ the {\it blowup of} $S$ {\it on} $D$. Note that the image $M\subset S^\prime$ of $\mathcal{N}_{S-{\rm int}D}(\partial D)\subset S-{\rm int}D$ with respect to the blowup of $S$ on $D$ is a crosscap. Conversely, the {\it blowdown of} $S^\prime$ {\it on }$M$ is the following manipulation that gives $S$ from $S^\prime $. We paste a disk on the boundary obtained by cutting $S$ along the center line $\mu $ of $M$. The blowdown of $S^\prime $ on $M$ is the inverse manipulation of the blowup of $S$ on $D$.

Let $\mu $ be a one-sided simple closed curve on $N_{g,n}$. Note that we obtain $N_{g-1,n}$ from $N_{g,n}$ by the blowdown of $N_{g,n}$ on $\mathcal{N}_{N_{g,n}}(\mu )$. Denote by $x_\mu $ the center point of a disk $D_\mu $ that is pasted on the boundary obtained by cutting $S$ along $\mu $. Let $e:D^\prime \hookrightarrow D_\mu \subset N_{g-1,n}$ be a smooth embedding of the unit disk $D^\prime \subset \mathbb C$ to $N_{g-1,n}$ such that $D_\mu =e(D^\prime )$ and $e(0)=x_\mu $. Let $\mathcal{M}(N_{g-1,n},x_\mu )$ be the group of isotopy classes of self-diffeomorphisms on $N_{g-1,n}$ fixing the boundary $\partial N_{g-1,n}$ and the point $x_\mu $, where isotopies also fix the boundary $\partial N_{g-1,n}$ and $x_\mu $. Then we have the {\it blowup homomorphism} 
\[
\varphi _\mu :\mathcal{M}(N_{g-1,n},x_\mu )\rightarrow \mathcal{M}(N_{g,n})
\]
that is defined as follows. For $h \in \mathcal{M}(N_{g-1,n},x_\mu )$, we take a representative $h^\prime $ of $h$ which satisfies either of the following conditions: (a) $h^\prime |_{D_\mu }$ is the identity map on $D_\mu $, (b) $h^\prime (x)=e(\overline{e^{-1}(x)})$ for $x\in D_\mu $, where $\overline{e^{-1}(x)}$ is the complex conjugation of $e^{-1}(x)\in \mathbb C$. Such $h^\prime $ is compatible with the blowup of $N_{g-1,n}$ on $D_\mu $, thus $\varphi _\mu (h)\in \mathcal{M}(N_{g,n})$ is induced and well defined (c.f. \cite[Subsection~2.3]{Szepietowski1}). 

The {\it point pushing map} 
\[
j_{x_\mu }:\pi _1(N_{g-1,n},x_\mu )\rightarrow \mathcal{M}(N_{g-1,n},x_\mu )
\]
is a homomorphism that is defined as follows. For $\gamma \in \pi _1(N_{g-1,n},x_\mu )$, $j_{x_\mu }(\gamma )\in \mathcal{M}(N_{g-1,n},x_\mu )$ is described as the result of pushing the point $x_\mu $ once along $\gamma $. The point pushing map comes from the Birman exact sequence. Note that for $\gamma _1$, $\gamma _2\in \pi _1(N_{g-1,n})$, $\gamma _1\gamma _2$ means $\gamma _1\gamma _2(t)=\gamma _2(2t)$ for $0\leq t\leq \frac{1}{2}$ and $\gamma _1\gamma _2(t)=\gamma _1(2t-1)$ for $\frac{1}{2}\leq t\leq 1$.

Following Szepietowski~\cite{Szepietowski1} we define the composition of the homomorphisms:
\[
\psi _{x_\mu }:=\varphi _\mu \circ j_{x_\mu }:\pi _1(N_{g-1,n},x_\mu )\rightarrow \mathcal{M}(N_{g,n}).
\]
For each closed curve $\alpha $ on $N_{g,n}$ which transversely intersects with $\mu $ at one point, we take a loop $\overline{\alpha }$ on $N_{g-1,n}$ based at $x_\mu $ such that $\overline{\alpha }$ has no self-intersection points on $D_\mu $ and $\alpha $ is the image of $\overline{\alpha }$ with respect to the blowup of $N_{g-1,n}$ on $D_\mu $. If $\alpha $ is simple, we take $\overline{\alpha }$ as a simple loop. The next two lemmas follow from the description of the point pushing map (see \cite[Lemma~2.2, Lemma~2.3]{Korkmaz2}). 

\begin{lem}\label{pushing1}
For a simple closed curve $\alpha $ on $N_{g,n}$ which transversely intersects with a one-sided simple closed curve $\mu $ on $N_{g,n}$ at one point, we have
\[
\psi _{x_\mu }(\overline{\alpha })=Y_{\mu ,\alpha }.
\]
\end{lem}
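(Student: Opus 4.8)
The plan is to produce $Y_{\mu,\alpha}$ and $\psi_{x_\mu}(\overline{\alpha})$ from one and the same isotopy of $N_{g-1,n}$, related by the blowup on $D_\mu$. The point is that the point pushing map $j_{x_\mu}(\overline{\alpha})$ can be represented by an isotopy that drags a small disk centered at $x_\mu$ once along $\overline{\alpha}$, and that blowing up this disk-dragging isotopy on $D_\mu$ replaces the disk by the crosscap $\mathcal{N}_{N_{g,n}}(\mu)$ and hence turns it into the crosscap-dragging isotopy that defines $Y_{\mu,\alpha}$.

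Concretely, I would first fix the embedding $e\colon D'\hookrightarrow D_\mu$ and choose a representative $H_1$ of $j_{x_\mu}(\overline{\alpha})$ coming from an ambient isotopy $\{H_t\}_{0\le t\le 1}$ with $H_0=\id$ and $H_t(x_\mu)=\overline{\alpha}(t)$, supported in a neighborhood $A$ of the simple loop $\overline{\alpha}$ that contains $D_\mu$, and carrying $D_\mu$ rigidly along $\overline{\alpha}$. Since $\overline{\alpha}$ is simple this is possible, and after a small adjustment fixing $x_\mu$ I may assume $H_1|_{D_\mu}$ satisfies condition (a) or (b) in the definition of $\varphi_\mu$; this is exactly the normalization needed for $\varphi_\mu(j_{x_\mu}(\overline{\alpha}))$ to be the mapping class induced by blowing up $H_1$ on $D_\mu$.

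Next I would blow up the entire isotopy $\{H_t\}$ on $D_\mu$. Since $H_t$ carries $D_\mu$ to a disk centered at $\overline{\alpha}(t)$, the blowup replaces these disks by crosscaps and yields an isotopy $\{\widetilde H_t\}$ of $N_{g,n}$, supported in the blowup of $A$, which is a regular neighborhood of $\mu\cup\alpha$. As $t$ runs from $0$ to $1$ the isotopy $\{\widetilde H_t\}$ drags the crosscap $\mathcal{N}_{N_{g,n}}(\mu)$ once along $\alpha$ and returns it to itself, so its time-one map is by definition $Y_{\mu,\alpha}$. On the other hand $\widetilde H_1$ is, by the previous paragraph, the blowup of $H_1$, that is $\varphi_\mu(j_{x_\mu}(\overline{\alpha}))=\psi_{x_\mu}(\overline{\alpha})$. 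Comparing the two descriptions of $\widetilde H_1$ gives the claimed equality.

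I expect the main obstacle to be the orientation and framing bookkeeping in the blowup step. The crosscap is obtained by the antipodal identification on $\partial D_\mu$, so I must verify that transporting $D_\mu$ once around $\alpha$ returns this identification to itself with the orientation compatible with the fixed orientation of $\mathcal{N}_{N_{g,n}}(\mu)$ and the chosen direction of $\alpha$, so that one lands on $Y_{\mu,\alpha}$ rather than on its inverse. This reduces to a local-model computation in the blowup of $A$ near the single point $\mu\cap\alpha$, matching the complex-conjugation normalization (b) of $\varphi_\mu$ against the convention defining the crosscap pushing map.
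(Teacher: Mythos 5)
Your argument is correct and is exactly the intended justification: the paper itself gives no proof beyond the remark that the lemma ``follows from the description of the point pushing map'' (citing Korkmaz), and that description is precisely your picture of dragging the disk $D_\mu$ along $\overline{\alpha}$ and observing that the blowup turns this into the crosscap-dragging isotopy defining $Y_{\mu,\alpha}$. The one point to phrase carefully is that ``blowing up the isotopy'' should be read as the correspondence between the family of embedded disks $H_t(D_\mu)$ in $N_{g-1,n}$ and the family of embedded M\"{o}bius bands in $N_{g,n}$, together with their ambient-isotopy extensions, since at intermediate times $H_t$ is a map between blowups on different disks rather than a self-map of $N_{g,n}$; with that reading, and your orientation check at the point $\mu\cap\alpha$, the proof is complete.
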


\begin{lem}\label{pushing2}
For a one-sided simple closed curve $\alpha $ on $N_{g,n}$ which transversely intersects with a one-sided simple closed curve $\mu $ on $N_{g,n}$ at one point, we take $\mathcal{N}_{N_{g-1,n}}(\overline{\alpha })$ such that the interior of $\mathcal{N}_{N_{g-1,n}}(\overline{\alpha })$ contains $D_\mu $. Suppose that $\overline{\delta _1}$ and $\overline{\delta _2}$ are distinct boundary components of $\mathcal{N}_{N_{g-1,n}}(\overline{\alpha })$, and $\delta _1$ and $\delta _2$ are two-sided simple closed curves on $N_{g,n}$ which are image of $\overline{\delta _1}$, $\overline{\delta _2}$ with respect to the blowup of $N_{g-1,n}$ on $D_\mu $, respectively. Then we have 
\[
Y_{\mu ,\alpha }=
t_{\delta _1}^{\varepsilon _{\delta _1}}t_{\delta _2}^{\varepsilon _{\delta _2}},
\] 
where $\varepsilon _{\delta _1}$ and $\varepsilon _{\delta _2}$ are $1$ or $-1$, and $\varepsilon _{\delta _1}$ and $\varepsilon _{\delta _2}$ depend on the orientations of $\alpha$, $\mathcal{N}_{N_{g,n}}(\delta _1)$ and $\mathcal{N}_{N_{g,n}}(\delta _2)$ (see Figure~\ref{crosscap_def_twist}). 
\end{lem}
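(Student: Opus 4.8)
The plan is to deduce the formula by combining Lemma~\ref{pushing1} with the standard description of the point pushing map along a \emph{two-sided} simple loop. The first step is a sidedness observation. Since $\alpha$ is one-sided and meets $\mu$ transversely in a single point, the loop $\overline{\alpha}$ on $N_{g-1,n}$ obtained by the blowdown is two-sided: the single orientation reversal encountered along $\alpha$ is accounted for entirely by its passage through the crosscap $\mathcal{N}_{N_{g,n}}(\mu)$, and this crosscap disappears under the blowdown. (If one prefers, this is the identity $w_1(\alpha)\equiv w_1(\overline{\alpha})+|\alpha\cap\mu|\pmod 2$ for the orientation character.) This is exactly what the hypothesis that $\mathcal{N}_{N_{g-1,n}}(\overline{\alpha})$ has two distinct boundary components $\overline{\delta_1},\overline{\delta_2}$ records: the neighborhood is an annulus with core $\overline{\alpha}$, and we may take it to contain $D_\mu$ in its interior with $x_\mu\in\overline{\alpha}$.

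Next I would apply the point pushing formula. By Lemma~\ref{pushing1} we have $Y_{\mu,\alpha}=\psi_{x_\mu}(\overline{\alpha})=\varphi_\mu\bigl(j_{x_\mu}(\overline{\alpha})\bigr)$. The description of the point pushing map along a two-sided simple loop (see \cite[Lemma~2.2, Lemma~2.3]{Korkmaz2}) expresses $j_{x_\mu}(\overline{\alpha})$ as the product $t_{\overline{\delta_1}}t_{\overline{\delta_2}}^{-1}$ in $\mathcal{M}(N_{g-1,n},x_\mu)$ for a suitable choice of orientation; this is legitimate relative to $x_\mu$ because the twisting curves $\overline{\delta_1},\overline{\delta_2}$ lie on the boundary of the annulus and so are disjoint from $x_\mu$. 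Since $\overline{\delta_1}$ and $\overline{\delta_2}$ are disjoint from $D_\mu$, a representative of each $t_{\overline{\delta_i}}$ can be chosen to satisfy condition (a) in the definition of $\varphi_\mu$, so $\varphi_\mu$ carries $t_{\overline{\delta_i}}$ to the Dehn twist $t_{\delta_i}^{\pm 1}$ along the blown-up image curve $\delta_i$. Composing yields $Y_{\mu,\alpha}=t_{\delta_1}^{\varepsilon_{\delta_1}}t_{\delta_2}^{\varepsilon_{\delta_2}}$, as claimed.

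The main obstacle is the determination of the two signs $\varepsilon_{\delta_1},\varepsilon_{\delta_2}$; everything else is a formal composition of the cited results. Two orientation choices interact here: the convention in the point pushing formula (which boundary component counts as the positive side of the oriented loop $\overline{\alpha}$), and the fact that the blowup is modeled on the antipodal identification of $\partial D_\mu$, so that the complex-conjugation branch (b) in the definition of $\varphi_\mu$ can reverse the fixed orientation of $\mathcal{N}_{N_{g,n}}(\delta_i)$ relative to that of $\mathcal{N}_{N_{g-1,n}}(\overline{\delta_i})$ on one side of the crosscap. I would fix both by passing to the explicit local model: the blowup of the annular neighborhood $\mathcal{N}_{N_{g-1,n}}(\overline{\alpha})$ on $D_\mu$ is a copy of $N_{1,2}$ containing the crosscap $\mathcal{N}_{N_{g,n}}(\mu)$, in which $\alpha$ is now one-sided and $\delta_1,\delta_2$ are the two boundary curves. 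Reading off the induced orientations of $\alpha$, $\mathcal{N}_{N_{g,n}}(\delta_1)$ and $\mathcal{N}_{N_{g,n}}(\delta_2)$ in this model—precisely the data displayed in Figure~\ref{crosscap_def_twist}—pins down $\varepsilon_{\delta_1}$ and $\varepsilon_{\delta_2}$ and completes the proof.
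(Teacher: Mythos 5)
Your argument is correct and takes essentially the same route as the paper, which proves this lemma by exactly the citation you use: the two-sided point-pushing formula $j_{x_\mu}(\overline{\alpha})=t_{\overline{\delta_1}}t_{\overline{\delta_2}}^{-1}$ from \cite[Lemma~2.2, Lemma~2.3]{Korkmaz2} composed with the blowup homomorphism $\varphi_\mu$ via Lemma~\ref{pushing1}. Your preliminary observation that $\overline{\alpha}$ is two-sided and your discussion of the sign conventions are the same points the paper delegates to the hypothesis and to Figure~\ref{crosscap_def_twist}.
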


\begin{figure}[h]
\includegraphics[scale=0.6]{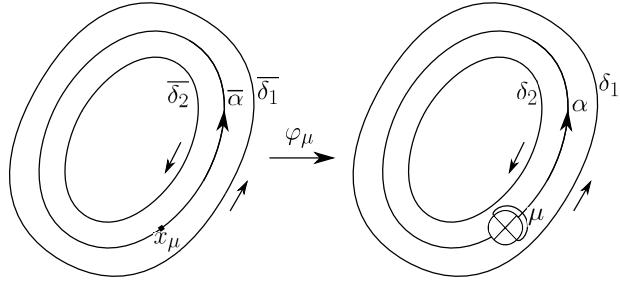}
\caption{If the orientations of $\alpha$, $\mathcal{N}_{N_{g,n}}(\delta _1)$ and $\mathcal{N}_{N_{g,n}}(\delta _2)$ are as above, then we have $Y_{\mu ,\alpha }=t_{\delta _1}t_{\delta _2}^{-1}$. The x-mark means that antipodal points of $\partial D_\mu $ are identified.}\label{crosscap_def_twist}
\end{figure}

By the definition of the homomorphism $\psi _{x_\mu }$ and Lemma~\ref{pushing1}, we have the following lemma.

\begin{lem}\label{pushing3}
Let $\alpha $ and $\beta $ be simple closed curves on $N_{g,n}$ which transversely intersect with a one-sided simple closed curve $\mu $ on $N_{g,n}$ at one point each. Suppose the product $\overline{\alpha }\overline{\beta }$ of $\overline{\alpha }$ and $\overline{\beta }$ in $\pi _1(N_{g-1,n},x_\mu )$ is represented by a simple loop on $N_{g-1,n}$, and $\alpha \beta $ is a simple closed curve on $N_{g,n}$ which is the image of the representative of $\overline{\alpha }\overline{\beta }$ with respect to the blowup of $N_{g-1,n}$ on $D_\mu $. Then we have
\[
Y_{\mu ,\alpha \beta }=Y_{\mu ,\alpha }Y_{\mu ,\beta }.
\]
\end{lem}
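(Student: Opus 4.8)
The plan is to exploit the fact that $\psi_{x_\mu}=\varphi_\mu\circ j_{x_\mu}$ is a group homomorphism, so that the multiplicative structure of $\pi_1(N_{g-1,n},x_\mu)$ is carried directly to composition in $\mathcal{M}(N_{g,n})$. Since both $\varphi_\mu$ and $j_{x_\mu}$ are homomorphisms, their composite $\psi_{x_\mu}$ is one as well, and therefore
\[
\psi_{x_\mu}(\overline{\alpha}\,\overline{\beta})=\psi_{x_\mu}(\overline{\alpha})\,\psi_{x_\mu}(\overline{\beta}).
\]
Essentially no geometry is needed beyond this observation and Lemma~\ref{pushing1}; the content is in matching up the right-hand and left-hand sides with the appropriate lifted loops.

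First I would apply Lemma~\ref{pushing1} to each of the simple closed curves $\alpha$ and $\beta$ separately. Because $\alpha$ and $\beta$ are simple and intersect $\mu$ transversely at one point, the loops $\overline{\alpha}$ and $\overline{\beta}$ may be taken to be simple loops on $N_{g-1,n}$ based at $x_\mu$ whose blowup images are $\alpha$ and $\beta$, and Lemma~\ref{pushing1} then gives $\psi_{x_\mu}(\overline{\alpha})=Y_{\mu,\alpha}$ and $\psi_{x_\mu}(\overline{\beta})=Y_{\mu,\beta}$.

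The remaining step is to recognize the element $\overline{\alpha}\,\overline{\beta}$ as $\psi_{x_\mu}$ applied to the loop associated with the curve $\alpha\beta$. By hypothesis the product $\overline{\alpha}\,\overline{\beta}$ is represented by a simple loop on $N_{g-1,n}$, and $\alpha\beta$ is precisely the blowup image of that simple representative; hence this simple loop is a legitimate choice of $\overline{\alpha\beta}$ in the sense of the construction preceding Lemma~\ref{pushing1}, and it coincides with $\overline{\alpha}\,\overline{\beta}$ in $\pi_1(N_{g-1,n},x_\mu)$. Applying Lemma~\ref{pushing1} to the simple closed curve $\alpha\beta$ then yields $\psi_{x_\mu}(\overline{\alpha}\,\overline{\beta})=\psi_{x_\mu}(\overline{\alpha\beta})=Y_{\mu,\alpha\beta}$. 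Chaining the three identities gives
\[
Y_{\mu,\alpha\beta}=\psi_{x_\mu}(\overline{\alpha}\,\overline{\beta})=\psi_{x_\mu}(\overline{\alpha})\,\psi_{x_\mu}(\overline{\beta})=Y_{\mu,\alpha}Y_{\mu,\beta},
\]
as desired.

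I expect the only delicate point to be the bookkeeping of the lifted loops: one must check that the simple loop representing $\overline{\alpha}\,\overline{\beta}$ is genuinely an admissible choice for $\overline{\alpha\beta}$ (same homotopy class, no self-intersection on $D_\mu$, correct blowup image) so that Lemma~\ref{pushing1} may be invoked on the left-hand side. The orientation conventions for $\alpha$, $\beta$, and $\alpha\beta$ should be fixed compatibly with the product in $\pi_1(N_{g-1,n},x_\mu)$ so that no sign $\varepsilon$ intervenes; once these choices are pinned down, the homomorphism property of $\psi_{x_\mu}$ does all the work and no explicit geometric computation is required.
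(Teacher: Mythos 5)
Your proposal is correct and follows exactly the paper's route: the paper derives this lemma directly from the fact that $\psi _{x_\mu }=\varphi _\mu \circ j_{x_\mu }$ is a homomorphism together with Lemma~\ref{pushing1} applied to $\alpha $, $\beta $ and $\alpha \beta $. The bookkeeping point you flag (that the simple representative of $\overline{\alpha }\overline{\beta }$ is an admissible choice of $\overline{\alpha \beta }$) is precisely what the hypotheses of the lemma are designed to guarantee, so nothing further is needed.
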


Finally, we recall the following relation between a Dehn twist and a Y-homeomorphism.

\begin{lem}\label{pushing4}
Let $\alpha $ be a two-sided simple closed curve on $N_{g,n}$ which transversely intersect with a one-sided simple closed curve $\mu $ on $N_{g,n}$ at one point and let $\delta $ be the boundary of $\mathcal{N}_{N_{g,n}}(\alpha \cup \mu )$. Then we have
\[
Y_{\mu ,\alpha }^2=t_\delta ^\varepsilon ,
\]
where $\varepsilon $ is $1$ or $-1$, and $\varepsilon $ depends on the orientations of $\alpha$ and $\mathcal{N}_{N_{g,n}}(\delta )$ (see Figure~\ref{yhomeo_square}). 
\end{lem}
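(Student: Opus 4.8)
The plan is to pass to the blown-down surface and compute $Y_{\mu,\alpha}^2$ as a point pushing map along the doubled loop $\overline{\alpha}^2$. First I would record the relevant topology: since $\alpha$ is two-sided and meets the one-sided curve $\mu$ transversely once, the regular neighborhood $\mathcal{N}_{N_{g,n}}(\alpha\cup\mu)$ is a one-holed Klein bottle $N_{2,1}$ whose unique boundary component is $\delta$, and $Y_{\mu,\alpha}$ is supported there. Blowing down the crosscap $\mathcal{N}_{N_{g,n}}(\mu)$ turns this into the M\"obius band $N_{1,1}$ carrying the marked point $x_\mu$, and $\overline{\alpha}$ becomes the (one-sided) core of the M\"obius band $M:=\mathcal{N}_{N_{g-1,n}}(\overline{\alpha})$. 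By Lemma~\ref{pushing1} and the fact that $\psi_{x_\mu}=\varphi_\mu\circ j_{x_\mu}$ is a homomorphism, $Y_{\mu,\alpha}^2=\psi_{x_\mu}(\overline{\alpha})^2=\varphi_\mu\big(j_{x_\mu}(\overline{\alpha}^2)\big)$, so the whole statement reduces to identifying the blow-up of the point pushing map along $\overline{\alpha}^2$.

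Next I would compute $j_{x_\mu}(\overline{\alpha}^2)$ explicitly in $M$. Model $M$ as the strip $\mathbb{R}\times[-1,1]$ modulo the glide reflection $(x,y)\mapsto(x+1,-y)$, with core $\{y=0\}$ and $x_\mu=(0,0)$; then the push along the core once is represented by $\Phi(x,y)=(x+\beta(y),y)$ for an even bump function $\beta$ with $\beta(\pm1)=0$ and $\beta\equiv 1$ near $0$. Thus $j_{x_\mu}(\overline{\alpha}^2)$ is represented by $\Phi^2(x,y)=(x+2\beta(y),y)$, which is the identity on a disk $D_\mu$ about $x_\mu$ and on $\partial M$. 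Forgetting $x_\mu$, $\Phi^2$ is isotopic to the identity rel $\partial M$ (indeed $\partial M$ bounds the M\"obius band $M$, so $t_{\partial M}=1$ by Lemma~\ref{trivial}); the nontriviality is created only by the blow-up $\varphi_\mu$, which glues a crosscap into $D_\mu$ and thereby forbids the isotopy $\Phi^{2s}$, as it would drag the crosscap twice around the core. I would then show that $\varphi_\mu(\Phi^2)$ is precisely the Dehn twist $t_\delta$ along $\delta=\partial\,\mathcal{N}_{N_{g,n}}(\alpha\cup\mu)$, for instance by tracking the image under $\varphi_\mu(\Phi^2)$ of a properly embedded arc in $N_{2,1}$ with both endpoints on $\delta$ and exhibiting one full boundary twist.

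The main obstacle is exactly this last identification: the double push is trivial inside the M\"obius band but becomes the boundary twist after inserting the crosscap, so the argument must carefully separate ``isotopic to the identity in $M$'' from ``isotopic to $t_\delta$ in $N_{2,1}$,'' and then pin down the exponent $\varepsilon$. The sign is governed by the fixed orientation of $\mathcal{N}_{N_{g,n}}(\delta)$ used to define $t_\delta$ and by the orientation of $\alpha$ entering the definition of $Y_{\mu,\alpha}$; reversing the orientation of $\alpha$ inverts $Y_{\mu,\alpha}$ and hence $Y_{\mu,\alpha}^2$, flipping $\varepsilon$. I would settle $\varepsilon\in\{\pm1\}$ by comparing the handedness produced by $\Phi^2$ after blow-up with the fixed orientation of $\mathcal{N}_{N_{g,n}}(\delta)$, reading off the configuration from Figure~\ref{yhomeo_square}. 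As an independent check, and as a possible shortcut, the equality $Y_{\mu,\alpha}^2=t_\delta^{\varepsilon}$ is the classical computation of Lickorish~\cite{Lickorish1} for the crosscap slide on the Klein bottle, which I would cite to confirm both the result and the sign.
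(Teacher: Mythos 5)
Your argument is correct, but it takes a more hands-on route than the paper, whose entire proof is the one sentence ``Lemma~\ref{pushing4} follows from relations in Lemma~\ref{trivial}, Lemma~\ref{pushing2} and Lemma~\ref{pushing3}.'' Unpacked, the paper's derivation is purely algebraic from the previously stated lemmas: since $\overline{\alpha}$ is one-sided in $N_{g-1,n}$, its square is represented by a simple two-sided loop through $x_\mu$ isotopic to $\partial\mathcal{N}_{N_{g-1,n}}(\overline{\alpha})$, so Lemma~\ref{pushing3} gives $Y_{\mu,\alpha}^2=Y_{\mu,\alpha\alpha}$ with $\alpha\alpha$ one-sided in $N_{g,n}$; Lemma~\ref{pushing2} then gives $Y_{\mu,\alpha\alpha}=t_{\delta_1}^{\varepsilon_1}t_{\delta_2}^{\varepsilon_2}$, where one of the two boundary curves still bounds a M\"obius band after the blowup and so has trivial twist by Lemma~\ref{trivial}, leaving $t_\delta^{\varepsilon}$. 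You share the first step (passing to $\psi_{x_\mu}(\overline{\alpha}^2)$ via the homomorphism property, which is exactly Lemma~\ref{pushing3}), but you replace the second step by an explicit coordinate model of the double push $\Phi^2$ on the M\"obius band and a direct identification of its blowup with $t_\delta$; that identification is the crux and you leave it as a to-do (arc-tracking, or a citation to \cite{Lickorish1}), whereas the paper gets it for free from Lemma~\ref{pushing2}. Your computation is a perfectly valid proof of the lemma as stated, and arguably more self-contained, but note what the paper's route buys and yours does not: by deriving the relation formally from Lemmas~\ref{trivial}, \ref{pushing2} and \ref{pushing3}, the paper shows that $Y_{\mu,\alpha}^2=t_\delta^{\varepsilon}$ is a consequence of Relations~(0), (I\hspace{-0.04cm}V) and (V) in the abstract group $G$, and this stronger fact is used later in the proof of Theorem~\ref{main-thm} (in handling Relations~(B6) and (C3)). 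A direct geometric verification in $\mathcal{M}(N_{g,n})$ would not suffice there, so if you intend your proof to feed into the main theorem you would still need to recast it as a formal consequence of the listed relations.
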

Lemma~\ref{pushing4} follows from relations in Lemma~\ref{trivial}, Lemma~\ref{pushing2} and Lemma~\ref{pushing3}.

\begin{figure}[h]
\includegraphics[scale=0.6]{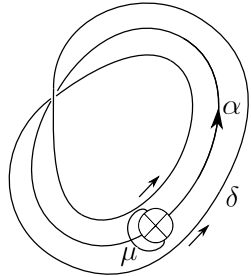}
\caption{If the orientations of $\alpha$ and $\mathcal{N}_{N_{g,n}}(\delta )$ are as above, then we have $Y_{\mu ,\alpha }^2=t_{\delta _1}$.}\label{yhomeo_square}
\end{figure}

\subsection{Stukow's finite presentation for $\mathcal{M}(N_{g,n})$}\label{section_Stukow_pres}

Let $e_i:D^\prime \hookrightarrow \Sigma _0$ for $i=1$, $2, \dots $, $g+1$ be smooth embeddings of the unit disk $D^\prime \subset \mathbb C$ to a 2-sphere $\Sigma _0$ such that $D_i:=e_i(D^\prime )$ and $D_j$ are disjoint for distinct $1\leq i,j\leq g+1$. Then we take a model of $N_g$ (resp. $N_{g,1}$) as the surface obtained from $\Sigma _0$ (resp. $\Sigma _0-{\rm int}D_{g+1}$) by the blowups on $D_1,\dots ,D_g$ and we describe the identification of $\partial D_i$ by the x-mark as in Figures~\ref{scc_gamma} and \ref{sccs}. When $n\in \{ 0,1\}$, for $1\leq i_1<i_2<\dots <i_k \leq g$, let $\gamma _{i_1,i_2,\dots ,i_k}$ be the simple closed curve on $N_{g,n}$ as in Figure~\ref{scc_gamma}. Then we define the simple closed curves $\alpha _i:=\gamma _{i,i+1}$ for $i=1$, $\dots $, $g-1$, $\beta :=\gamma _{1,2,3,4}$ and $\mu _1:=\gamma _1$ (see Figure~\ref{sccs}), and the mapping classes $a_i:=t_{\alpha _i}$ for $i=1$, $\dots $, $g-1$, $b:=t_\beta $ and $y:=Y_{\mu _1,\alpha _1}$. Then the following finite presentation for $\mathcal{M}(N_{g,n})$ is obtained by Lickorish~\cite{Lickorish1} for $(g,n)=(2,0)$, Stukow~\cite{Stukow1} for $(g,n)=(2,1)$, Birman-Chillingworth~\cite{Birman-Chillingworth} for $(g,n)=(3,0)$ and Theorem~3.1 and Proposition~3.3 in \cite{Stukow2} for the other $(g,n)$ such that $g\geq 3$ and $n\in \{ 0,1\}$. 

\begin{figure}[h]
\includegraphics[scale=0.7]{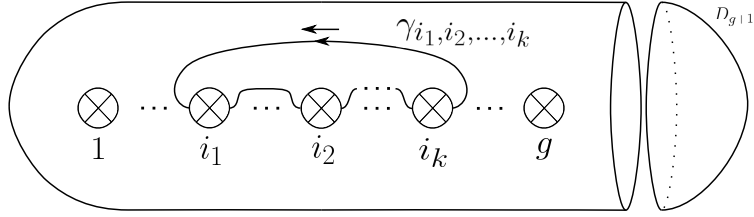}
\caption{Simple closed curve $\gamma _{i_1,i_2,\dots ,i_k}$ on $N_{g,n}$.}\label{scc_gamma}
\end{figure}

\begin{figure}[h]
\includegraphics[scale=0.7]{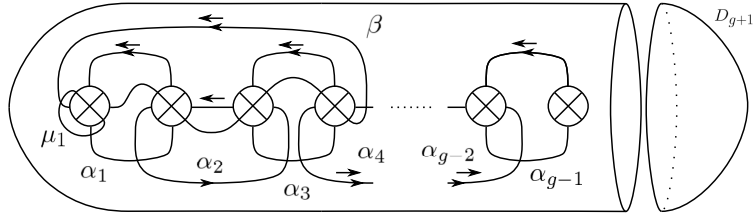}
\caption{Simple closed curves $\alpha _1$, $\dots $, $\alpha _{g-1}$, $\beta $ and $\mu _1$ on $N_{g,n}$.}\label{sccs}
\end{figure}

\begin{thm}[\cite{Lickorish1}, \cite{Birman-Chillingworth}, \cite{Stukow1}, \cite{Stukow2}]\label{thm_Stukow}
For $(g,n)=(2,0)$, $(2,1)$ and $(3,0)$, we have the following presentation for $\mathcal{M}(N_{g,n})$:
\begin{eqnarray*}
\mathcal{M}(N_2)&=&\bigl< a_1, y \mid a_1^2=y^2=(a_1y)^2=1\bigr> \cong \mathbb Z_2\oplus \mathbb Z_2,\\
\mathcal{M}(N_{2,1})&=&\bigl< a_1, y \mid ya_1y^{-1}=a_1^{-1}\bigr>,\\ 
\mathcal{M}(N_3)&=&\bigl< a_1, a_2, y \mid a_1a_2a_1=a_2a_1a_2, y^2=(a_1y)^2=(a_2y)^2=(a_1a_2)^6=1 \bigr>.
\end{eqnarray*}

If $g\geq 4$ and $n\in \{ 0,1\}$ or $(g,n)=(3,1)$, then $\mathcal{M}(N_{g,n})$ admits a presentation with generators $a_1,\dots , a_{g-1}, y$, and $b$ for $g\geq 4$. The defining relations are
\begin{enumerate}
 \item[(A1)] $[a_i,a_j]=1$\hspace{1cm} for $g\geq 4$, $|i-j|>1$,
 \item[(A2)] $a_ia_{i+1}a_i=a_{i+1}a_ia_{i+1}$\hspace{1cm} for $i=1,\dots ,g-2$,
 \item[(A3)] $[a_i,b]=1$\hspace{1cm} for $g\geq 4$, $i\not=4$,
 \item[(A4)] $a_4ba_4=ba_4b$\hspace{1cm} for $g\geq 5$,
 \item[(A5)] $(a_2a_3a_4b)^{10}=(a_1a_2a_3a_4b)^6$\hspace{1cm} for $g\geq 5$,
 \item[(A6)] $(a_2a_3a_4a_5a_6b)^{12}=(a_1a_2a_3a_4a_5a_6b)^9$\hspace{1cm} for $g\geq 7$,
 \item[(A9a)] $[b_2,b]=1$\hspace{1.0cm} for $g=6$,
 \item[(A9b)] $[a_{g-5},b_{\frac{g-2}{2}}]=1$\hspace{1.0cm} for $g\geq 8$ even,\\
 where $b_0=a_1$, $b_1=b$ and\\ $b_{i+1}=(b_{i-1}a_{2i}a_{2i+1}a_{2i+2}a_{2i+3}b_i)^5(b_{i-1}a_{2i}a_{2i+1}a_{2i+2}a_{2i+3})^{-6}$ \\
for $1\leq i\leq \frac{g-4}{2}$,
 \item[(B1)] $y(a_2a_3a_1a_2ya_2^{-1} a_1^{-1}a_3^{-1}a_2^{-1}) = (a_2a_3a_1a_2ya_2^{-1}a_1^{-1}a_3^{-1}a_2^{-1})y$ \hspace{1cm}for $g\geq 4$,
 \item[(B2)] $y(a_2a_1y^{-1}a_2^{-1}ya_1a_2)y=a_1(a_2a_1y^{-1}a_2^{-1}ya_1a_2)a_1$,
 \item[(B3)] $[a_i,y]=1$\hspace{1cm} for $g\geq 4$, $i=3,\dots ,g-1$,
 \item[(B4)] $a_2(ya_2y^{-1}) = (ya_2y^{-1})a_2$,
 \item[(B5)] $ya_1=a_1^{-1}y$,
 \item[(B6)] $byby^{-1} = \{a_1a_2a_3(y^{-1}a_2y)a_3^{-1}a_2^{-1}a_1^{-1} \}\{a_2^{-1}a_3^{-1}(ya_2y^{-1})a_3a_2\}$\hspace{1cm} for $g\geq 4$,
 \item[(B7)] $[(a_4a_5a_3a_4a_2a_3a_1a_2ya_2^{-1}a_1^{-1}a_3^{-1}a_2^{-1}a_4^{-1}a_3^{-1}a_5^{-1}a_4^{-1}),b] =1$\hspace{1.0cm} for $g\geq 6$,
 \item[(B8)] $\{(ya_1^{-1}a_2^{-1}a_3^{-1}a_4^{-1})b(a_4a_3a_2a_1y^{-1})\}\{(a_1^{-1}a_2^{-1}a_3^{-1}a_4^{-1})b^{-1}(a_4a_3a_2a_1)\}$
	     $=\{(a_4^{-1}a_3^{-1}a_2^{-1})y(a_2a_3a_4)\}\{a_3^{-1}a_2^{-1}y^{-1}a_2a_3\}\{a_2^{-1}ya_2\}y^{-1}$\hspace{0.9cm} for $g\geq 5$,
 \item[(C1)] $(a_1a_2\cdots a_{g-1})^g=1$\hspace{1.0cm} for $g\geq 4$ even and $n=0$,
 \item[(C2)] $[a_1,\rho ]=1$ \hspace{1.0cm}for $g\geq 4$ and $n=0$,\\
 where $\rho =(a_1a_2\cdots a_{g-1})^g$ for $g$ odd and\\
 $\rho =(y^{-1}a_2a_3\cdots a_{g-1}ya_2a_3\cdots a_{g-1})^{\frac{g-2}{2}}y^{-1}a_2a_3\cdots a_{g-1}$ for $g$ even,
 \item[(C3)] $\rho ^2=1$\hspace{1.0cm}for $g\geq 4$ and $n=0$,
 \item[(C4)] $(y^{-1}a_2a_3\cdots a_{g-1}ya_2a_3\cdots a_{g-1})^{\frac{g-1}{2}}=1$\hspace{1.0cm} for $g\geq 4$ odd and $n=0$,
\end{enumerate}
where $[x_1,x_2]=x_1x_2x_1^{-1}x_2^{-1}$.
\end{thm}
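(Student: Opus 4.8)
Since the statement simply records presentations already established in \cite{Lickorish1}, \cite{Birman-Chillingworth}, \cite{Stukow1} and \cite{Stukow2}, the natural plan is not to build a presentation from scratch but to \emph{transform} one already available in the literature into the asserted form. For the generic range ($g\geq 4$, $n\in\{0,1\}$, together with $(g,n)=(3,1)$) the starting point is the finite presentation of $\mathcal{M}(N_{g,n})$ due to Paris--Szepietowski~\cite{Paris-Szepietowski}, and the whole argument then amounts to a controlled sequence of Tietze transformations carrying the Paris--Szepietowski generators and relations to the generators $a_1,\dots,a_{g-1},b,y$ and the relations (A1)--(C4). The three exceptional small cases $(g,n)=(2,0),(2,1),(3,0)$ are handled separately, where the group is small enough to be analyzed directly.

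First I would fix, once and for all, the curves $\alpha_i$, $\beta$ and $\mu_1$ of Figure~\ref{sccs} and record each Paris--Szepietowski generator as an explicit word in $a_i=t_{\alpha_i}$, $b=t_\beta$ and $y=Y_{\mu_1,\alpha_1}$, and conversely express $a_i,b,y$ in their generators. The main tool here is the pair of braid relations, Lemma~\ref{braid1} and Lemma~\ref{braid2}: they let one conjugate a twist or a crosscap slide along a curve into a twist or slide along the image curve, so that every twist appearing in the old generating set can be rewritten along the fixed chain $\alpha_1,\dots,\alpha_{g-1}$ and $\beta$. The $k$-chain relation (Lemma~\ref{chain}) and the relation $Y_{\mu,\alpha}^2=t_\delta^{\varepsilon}$ of Lemma~\ref{pushing4} are then used to eliminate redundant boundary twists and to pin down the orientation exponents $\varepsilon$.

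The substantive step, and the one I expect to be the main obstacle, is to verify \emph{completeness}: that after the substitution every Paris--Szepietowski relation is a consequence of (A1)--(C4), and conversely. This is where the long conjugation relations (B1), (B2), (B6), (B8) and the ``rotation'' relations (C1)--(C4) must be checked term by term; in particular the elements $b_i$ defined recursively in (A9b) and the element $\rho$ in (C2)--(C3) have to be identified with specific mapping classes—certain Dehn twists along the curves obtained by iterating the chain construction, and an involution coming from a symmetry of the closed surface $N_g$, respectively—before their relations become transparent. Keeping track of the signs $\varepsilon$ throughout is the persistent bookkeeping difficulty, since on a non-orientable surface a conjugation can reverse the local orientation and hence invert a twist, as Lemma~\ref{braid1} records.

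Finally, for the exceptional cases $(g,n)=(2,0),(2,1),(3,0)$ I would argue directly, as in \cite{Lickorish1}, \cite{Stukow1} and \cite{Birman-Chillingworth}: here $\mathcal{M}(N_{g,n})$ is either finite or virtually cyclic, so one computes the group explicitly and checks that the displayed relations—$a_1^2=y^2=(a_1y)^2=1$ in the $N_2$ case, $ya_1y^{-1}=a_1^{-1}$ in the $N_{2,1}$ case, and the braid and chain relations among $a_1,a_2,y$ together with $(a_1a_2)^6=1$ in the $N_3$ case—present it, without recourse to Paris--Szepietowski.
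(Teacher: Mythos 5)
This statement is quoted background: the paper offers no proof of it, importing it directly from the cited references, so the only meaningful comparison is with how those references establish it. Your outline matches that route exactly --- Stukow~\cite{Stukow2} does obtain the generic-range presentation by Tietze transformations from the Paris--Szepietowski presentation~\cite{Paris-Szepietowski}, expressing their generators through $a_1,\dots,a_{g-1},b,y$ and tracking the orientation signs via the braid relations, while the cases $(g,n)=(2,0),(2,1),(3,0)$ are handled by the direct computations of \cite{Lickorish1}, \cite{Stukow1} and \cite{Birman-Chillingworth} --- so your proposal is correct in approach and essentially the same as the one the paper relies on, with the caveat that the actual term-by-term verification of relations (A1)--(C4) is carried out in \cite{Stukow2} and is only sketched, not reproduced, in your write-up.
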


\section{Presentation for $\mathcal{M}(N_{g,n})$}\label{section-mainthm}

The main theorem in this paper is as follows:

\begin{thm}\label{main-thm}
For $g\geq 1$ and $n\in \{ 0,1\}$, $\mathcal{M}(N_{g,n})$ has the following presentation:

generators: $\{ t_c \mid c:\hspace{0.05cm} \text{two-sided s.c.c. on }N_{g,n} \}$\\
\hspace{2.01cm}$\cup \{ Y_{\mu ,\alpha } \mid \mu :\hspace{0.05cm} \text{one-sided s.c.c. on } N_{g,n},\ \alpha :\hspace{0.05cm} \text{s.c.c. on } N_{g,n},\ |\mu \cap \alpha |=1\}$.\\
Denote the generating set by $X$. 

relations:
\begin{enumerate}
 \item[(0)] $t_c=1$ when $c$ bounds a disk or a M\"{o}bius band in $N_{g,n}$,
 \item[(I)] All the braid relations\\
 \[
\left\{ \begin{array}{lll}
(i) & ft_cf^{-1}=t_{f(c)}^{\varepsilon _{f(c)}}&\text{for} \ f\in X,   \\
(ii) & fY_{\mu ,\alpha }f^{-1}=Y_{f(\mu ),f(\alpha )}^{\varepsilon _{f(\alpha )}}&\text{for} \ f\in X,
 \end{array} \right.
 \]
 \item[(I\hspace{-0.06cm}I)] All the 2-chain relations,
 \item[(I\hspace{-0.06cm}I\hspace{-0.06cm}I)] All the lantern relations,
 \item[(I\hspace{-0.04cm}V)] All the relations in Lemma~\ref{pushing3},
 i.e. $Y_{\mu ,\alpha \beta }=Y_{\mu ,\alpha }Y_{\mu ,\beta }$,
 \item[(V)] All the relations in Lemma~\ref{pushing2},
 i.e. $Y_{\mu ,\alpha }=t_{\delta _1}^{\varepsilon _{\delta _1}}t_{\delta _2}^{\varepsilon _{\delta _2}}$.
\end{enumerate} 
\end{thm}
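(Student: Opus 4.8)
The plan is to compare the group $\mathcal{M}'$ presented in the statement with $\mathcal{M}(N_{g,n})$ through Stukow's finite presentation (Theorem~\ref{thm_Stukow}), following the strategy Gervais and Luo used in the orientable case. Each generator in $X$ is the class of an honest self-diffeomorphism, and each relation (0)--(V) holds in $\mathcal{M}(N_{g,n})$ by Lemmas~\ref{trivial}, \ref{braid1}, \ref{braid2}, \ref{chain}, \ref{lantern}, \ref{pushing2} and~\ref{pushing3}; so first I would record the well-defined homomorphism $\Phi\colon \mathcal{M}'\to \mathcal{M}(N_{g,n})$ sending each symbol to the corresponding mapping class. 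It is surjective because Dehn twists and crosscap pushing maps generate $\mathcal{M}(N_{g,n})$ (indeed Stukow's generators $a_1,\dots,a_{g-1},b,y$ already lie in $X$). The whole problem is thus to prove injectivity, and I would do this by building an explicit inverse out of Stukow's presentation.

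Concretely, I would establish two statements inside the abstract group $\mathcal{M}'$: (A) the finite set $\{a_1,\dots,a_{g-1},b,y\}$ generates $\mathcal{M}'$; and (B) these elements satisfy every defining relation (A1)--(C4) of Theorem~\ref{thm_Stukow}. Granting (A) and (B), Stukow's theorem produces a homomorphism $\Psi\colon \mathcal{M}(N_{g,n})\to \mathcal{M}'$ carrying each of Stukow's generators to the like-named element of $X$; it is surjective by (A), and $\Phi\circ\Psi=\mathrm{id}$ on generators, so $\Psi$ is a two-sided inverse of $\Phi$ and $\Phi$ is an isomorphism.

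For (A) the engine is the braid relations (I). Applied iteratively, relation (I)(i) shows that for any word $w$ in the generators and any two-sided simple closed curve $c$ one has $w\,t_c\,w^{-1}=t_{w(c)}^{\pm1}$, again a twist-generator up to sign, and (I)(ii) controls conjugates of crosscap pushing maps the same way; hence the subgroup $H$ generated by $\{a_1,\dots,a_{g-1},b,y\}$ is closed under conjugating any twist- or $Y$-generator by an element of $H$. Since $\mathcal{M}(N_{g,n})$ is generated by Stukow's elements and acts transitively on each topological type of configuration (the change-of-coordinates principle), every two-sided simple closed curve is the image of one of the standard curves $\gamma_{i_1,\dots,i_k}$ under a product of Stukow's generators; relation~(0) kills the twists about curves bounding a disk or a M\"obius band, and the remaining standard twists are reached from $a_i$ and $b$ by the closure property. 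The same transitivity places every Y-homeomorphism $Y_{\mu,\alpha}$ (with $\alpha$ two-sided) in $H$ as a conjugate of $y$, while relation~(V) rewrites each $Y_{\mu,\alpha}$ with $\alpha$ one-sided as a product of two Dehn twists already known to be in $H$; thus $H=\mathcal{M}'$. Tracking the orientation exponents $\varepsilon$ and checking that finitely many orbit representatives suffice for every genus and both values of $n$ is the delicate point here.

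For (B), each of Stukow's relations should be recognized as a consequence of (0)--(V). The commutation and braid relations (A1)--(A4) and (B3) follow directly from the braid relations (I) (the cases $T_0$ and $T_1$); relations such as (B1), (B2), (B4), (B5), (B6) are instances of braid relation (ii) for a suitable $f$, combined with (IV) and (V); and the longer relations (A5), (A6), (A9a), (A9b), (B7), (B8) together with the crosscap/boundary relations (C1)--(C4) are derived from the $2$-chain relations (II) and the lantern relations (III), exactly as the complicated relations of Wajnryb's presentation are derived in Gervais' and Luo's arguments, with relation~(0) used to discard twists along inessential boundary curves. I expect (B) to be the most computation-heavy step, reproducing Gervais-type chain-and-lantern derivations for each long relation, whereas the genuine conceptual obstacle is the orientation-sign and transitivity analysis in (A): this is where the non-orientable features --- the crosscap pushing maps and the auxiliary relations (IV) and (V) --- enter and the orientable argument does not transfer verbatim.
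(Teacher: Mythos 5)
Your overall architecture coincides with the paper's: the paper likewise takes the tautological surjection $\varphi\colon G\to\mathcal{M}(N_{g,n})$ and builds a map $\psi\colon\mathcal{M}(N_{g,n})\to G$ out of Stukow's presentation, first checking that Stukow's relations hold in $G$ (your step (B)) and then checking that the images of Stukow's generators generate $G$ (your step (A)); since $\varphi\circ\psi=\operatorname{id}$, this yields the isomorphism. Your step (B) is essentially the paper's computation, though your attributions are slightly off: relations (A5), (A6), (A9a), (A9b) are handled wholesale by observing that they live in the mapping class group of the orientable subsurface $\mathcal{N}_{N_{g,n}}(\alpha_1\cup\cdots\cup\alpha_{g-1})$ and invoking Luo's presentation there, and (C3) for $g$ even and (C4) are \emph{not} consequences of (II) and (III) alone --- the paper must rewrite $\rho$ as $Y_{\mu_1,\gamma_{1,2,\dots,g}}A^{g-1}$ using (I) and (IV) and then apply Lemma~\ref{pushing4} and relation (V), so (IV) and (V) enter the $C$-relations as well.

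The genuine gap is in step (A). Your only engine there is conjugation-closure: $w\,t_c\,w^{-1}=t_{w(c)}^{\pm1}$ shows that the subgroup $H$ generated by $\{a_1,\dots,a_{g-1},b,y\}$ contains $t_{w(\alpha_i)}$, $t_{w(\beta)}$ and $Y_{w(\mu_1),w(\alpha_1)}$ for every word $w$ in those generators. But conjugation preserves the topological type of the curve (or of the pair $(\mu,\alpha)$), so this reaches only the generators whose underlying configuration lies in the $\mathcal{M}(N_{g,n})$-orbit of $\alpha_1$, of $\beta$, or of $(\mu_1,\alpha_1)$. It cannot reach: (i) $t_c$ for $c$ separating and not bounding a disk or M\"obius band --- and note that no $\gamma_{i_1,\dots,i_k}$ is separating, so your assertion that every two-sided simple closed curve is the image of some standard $\gamma_{i_1,\dots,i_k}$ is false; (ii) $t_c$ for $c$ non-separating with orientable complement when $g\geq 6$ is even, since then $c$ is in the orbit of $\gamma_{1,2,\dots,g}$, not of any $\alpha_i$ or of $\beta$; (iii) $Y_{\mu,\alpha}$ with $\alpha$ two-sided and $N_{g,n}-\alpha$ orientable. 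To bridge these orbits the generation argument itself must consume further relations: a lantern relation (III) expresses $t_{\gamma_{1,2,\dots,g}}$ through twists of type (i) already in $H$; chain relations (II) produce $t_c$ for $c$ separating off an orientable piece; an induction on genus driven by lantern relations handles $c$ separating off a non-orientable piece; and relation (IV) decomposes $Y_{\mu_1,\gamma_{1,2,\dots,g}}$ as a product of $Y_{\mu_1,\gamma_{1,j}}$'s of type handled in (iii)'s complement. Without these steps $H$ is a priori a proper subgroup of $G$, your $\Psi$ need not be surjective, and the proof of injectivity of $\Phi$ collapses.
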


In (I) and (I\hspace{-0.04cm}V) one can substitute the right hand side of (V) for each generator $Y_{\mu ,\alpha }$ with one-sided $\alpha $. Then one can remove the generators $Y_{\mu ,\alpha }$ with one-sided $\alpha $ and relations~(V) from the presentation.


We denote by $G$ the group which has the presentation in Theorem~\ref{main-thm}. Let $\iota :\Sigma _{h,m}\hookrightarrow N_{g,n}$ be a smooth embedding and let $G^\prime $ be the group whose presentation has all Dehn twists along simple closed curves on $\Sigma _{h,m}$ as generators and Relations~(0$^\prime $), (I$^\prime $), (I\hspace{-0.06cm}I) and (I\hspace{-0.06cm}I\hspace{-0.06cm}I) in Theorem~\ref{pres_Gervais}. By Theorem~\ref{pres_Gervais}, $\mathcal{M}(\Sigma _{h,m})$ is isomorphic to $G^\prime $, and we have the homomorphism $G^\prime \rightarrow G$ defined by the correspondence of $t_c$ to $t_{\iota (c)}^{\varepsilon _{\iota (c)}}$, where $\varepsilon _{\iota (c)}=1$ if the restriction $\iota |_{\mathcal{N}_{\Sigma _{h,m}}(c)}:\mathcal{N}_{\Sigma _{h,m}}(c)\rightarrow \mathcal{N}_{N_{g,n}}(\iota (c))$ is orientation preserving, and $\varepsilon _{\iota (c)}=-1$ if the restriction $\iota |_{\mathcal{N}_{\Sigma _{h,m}}(c)}:\mathcal{N}_{\Sigma _{h,m}}(c)\rightarrow \mathcal{N}_{N_{g,n}}(\iota (c))$ is orientation reversing. Then we remark the following.  

\begin{rem}\label{orientable}
The composition $\iota _\ast :\mathcal{M}(\Sigma _{h,m})\rightarrow G$ of the isomorphism $\mathcal{M}(\Sigma _{h,m})\rightarrow G^\prime $ and the homomorphism $G^\prime \rightarrow G$ is a homomorphism. In particular, if a product $t_{c_1}^{\varepsilon _1}t_{c_2}^{\varepsilon _2}\cdots t_{c_k}^{\varepsilon _k}$ of Dehn twists along simple closed curves $c_1$, $c_2$, $\dots $, $c_k$ on a connected compact orientable subsurface of $N_{g,n}$ is equal to the identity map in the mapping class group of the subsurface, then $t_{c_1}^{\varepsilon _1}t_{c_2}^{\varepsilon _2}\cdots t_{c_k}^{\varepsilon _k}$ is equal to $1$ in $G$. That means such a relation $t_{c_1}^{\varepsilon _1}t_{c_2}^{\varepsilon _2}\cdots t_{c_k}^{\varepsilon _k}=1$ is obtained from Relations~(0), (I), (I\hspace{-0.06cm}I) and (I\hspace{-0.06cm}I\hspace{-0.06cm}I).
\end{rem}


Set $X^\pm :=X\cup \{ x^{-1} \mid x\in X \}$. By Relation~(I), we have the following lemma.

\begin{lem}\label{braid3}
For $f\in G$, suppose that $f=f_1f_2\dots f_k$, where $f_1$, $f_2$, $\dots $, $f_k\in X^\pm $. Then we have
\[
\left\{ \begin{array}{ll}
(i) & ft_cf^{-1}=t_{f(c)}^{\varepsilon _{f(c)}},   \\
(ii) & fY_{\mu ,\alpha }f^{-1}=Y_{f(\mu ),f(\alpha )}^{\varepsilon _{f(\alpha )}}. \end{array} \right.
\]
\end{lem}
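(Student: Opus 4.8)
The plan is to prove Lemma~\ref{braid3} by induction on $k$, the length of the word $f=f_1f_2\cdots f_k$ in the generators $X^\pm$. The base case $k=1$ is exactly Relation~(I): when $f=f_1\in X$, parts (i) and (ii) are the braid relations (I)(i) and (I)(ii) verbatim. I would also need to handle $f=f_1^{-1}$ with $f_1\in X$; this follows by rewriting Relation~(I) in the form $f_1^{-1}t_{f_1(c)}^{\varepsilon}f_1=t_c$ and then renaming the curve, so that conjugation by an inverse generator still lands on a generator of the appropriate type with the correct exponent.

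For the inductive step, suppose the claim holds for words of length $k-1$, and write $f=f_1 g$ where $g=f_2\cdots f_k$ has length $k-1$. For part (i), I would compute
\[
ft_cf^{-1}=f_1\bigl(g t_c g^{-1}\bigr)f_1^{-1}=f_1\, t_{g(c)}^{\varepsilon_{g(c)}}\, f_1^{-1},
\]
using the inductive hypothesis in the middle step. Now $f_1$ is a single generator in $X^\pm$, so I can apply the base case to conjugate the Dehn twist $t_{g(c)}$ by $f_1$, obtaining $t_{f_1(g(c))}^{\varepsilon}$ for an appropriate sign $\varepsilon$. The remaining task is bookkeeping: one must check that $f_1(g(c))=(f_1 g)(c)=f(c)$ as curves (which is just functoriality of the action on isotopy classes of curves), and that the exponents multiply correctly, i.e.\ the composite sign equals $\varepsilon_{f(c)}$. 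The sign $\varepsilon_{f(c)}$ records whether $f|_{\mathcal{N}(c)}$ is orientation preserving or reversing, and since orientation behavior is multiplicative under composition of diffeomorphisms, the product $\varepsilon_{f_1(g(c))}\cdot\varepsilon_{g(c)}$ indeed equals $\varepsilon_{f(c)}$. Part (ii) is entirely parallel, conjugating the crosscap pushing map $Y_{\mu,\alpha}$ instead, tracking $(f_1 g)(\mu)=f(\mu)$ and $(f_1 g)(\alpha)=f(\alpha)$, with the sign $\varepsilon_{f(\alpha)}$ governed by whether the fixed orientation of $f(\alpha)$ agrees with the induced one.

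The main obstacle I expect is not the algebra of the induction but the sign/orientation accounting. In Relation~(I) the exponent $\varepsilon_{f(c)}$ (or $\varepsilon_{f(\alpha)}$) is defined intrinsically in terms of whether $f$ preserves a chosen orientation of the regular neighborhood (or of the oriented curve $\alpha$), \emph{not} as a product of per-generator signs; so in the inductive step one must verify that the naive composition of signs coming from two successive applications of the braid relation actually reproduces this intrinsically defined sign. I would justify this by appealing to the cocycle/multiplicativity property of the orientation-reversal indicator: for diffeomorphisms $f_1,g$ and a two-sided curve $c$, the restriction $(f_1 g)|_{\mathcal{N}(c)}$ is orientation preserving if and only if an even number of the two restrictions $g|_{\mathcal{N}(c)}$ and $f_1|_{\mathcal{N}(g(c))}$ are orientation reversing, which is precisely the statement that the signs multiply. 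Once this multiplicativity is in hand the induction closes cleanly, and the same remark applies to the $\alpha$-orientation sign governing the $Y$-maps in part (ii).
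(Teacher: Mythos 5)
The paper gives no explicit proof of Lemma~\ref{braid3}, stating only that it follows from Relation~(I); the intended argument is precisely the induction on word length that you carry out, with the sign multiplicativity you identify being the only point requiring care (and it holds because the orientations of the regular neighborhoods and of the curves are fixed once and for all, so the per-generator signs compose correctly). Your proposal is correct and matches the paper's approach.
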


The next lemma follows from an argument of the combinatorial group theory (for instance, see \cite[Lemma~4.2.1, p42]{Johnson}).

\begin{lem}\label{combinatorial}
For groups $\Gamma $, $\Gamma ^\prime $ and $F$, a surjective homomorphism $\pi :F\rightarrow \Gamma $ and a homomorphism $\nu :F\rightarrow \Gamma ^\prime $, we define a map $\nu ^\prime :\Gamma \rightarrow \Gamma ^\prime $ by $\nu ^\prime (x):=\nu (\widetilde{x})$ for $x\in \Gamma $, where $\widetilde{x}\in F$ is a lift of $x$ with respect to $\pi $ (see the diagram below).

Then if ${\rm ker}\pi \subset {\rm ker}\nu $, $\nu ^\prime $ is well-defined and a homomorphism.
\end{lem}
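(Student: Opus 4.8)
The plan is to verify directly the two assertions of the lemma---that $\nu^\prime$ is well-defined and that it is a homomorphism---using only the hypothesis ${\rm ker}\pi\subset{\rm ker}\nu$ together with the surjectivity of $\pi$. Because $\pi$ is surjective, every $x\in\Gamma$ admits at least one lift $\widetilde{x}\in F$ with $\pi(\widetilde{x})=x$, so the rule $\nu^\prime(x):=\nu(\widetilde{x})$ assigns a value to each $x\in\Gamma$; the substance of the argument is to show that this value is independent of the chosen lift.

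For well-definedness I would take two lifts $\widetilde{x}_1$ and $\widetilde{x}_2$ of the same element $x$, so that $\pi(\widetilde{x}_1)=\pi(\widetilde{x}_2)=x$. Then $\pi(\widetilde{x}_1\widetilde{x}_2^{-1})=1$ in $\Gamma$, hence $\widetilde{x}_1\widetilde{x}_2^{-1}\in{\rm ker}\pi\subset{\rm ker}\nu$. Applying $\nu$ yields $\nu(\widetilde{x}_1)=\nu(\widetilde{x}_2)$, which is exactly the required independence. This is the single place where the inclusion of kernels enters, and I expect it to be the crux---indeed the only nontrivial point---of the proof.

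For the homomorphism property, given $x,y\in\Gamma$ with chosen lifts $\widetilde{x},\widetilde{y}$, I would note that $\widetilde{x}\widetilde{y}$ is itself a lift of $xy$, since $\pi(\widetilde{x}\widetilde{y})=\pi(\widetilde{x})\pi(\widetilde{y})=xy$. Invoking the well-definedness already established, I may compute $\nu^\prime(xy)$ using this particular lift, obtaining $\nu^\prime(xy)=\nu(\widetilde{x}\widetilde{y})=\nu(\widetilde{x})\nu(\widetilde{y})=\nu^\prime(x)\nu^\prime(y)$, where the middle equality uses that $\nu$ is a homomorphism.

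I do not anticipate any genuine obstacle: this is the standard factorization-through-the-quotient argument, amounting to the observation that $\nu$ descends to the quotient $F/{\rm ker}\pi$, which is identified with $\Gamma$ via $\pi$. The only matter requiring attention is the order of the steps---well-definedness must be secured first, so that it can legitimately be used when computing $\nu^\prime(xy)$ from the lift $\widetilde{x}\widetilde{y}$.
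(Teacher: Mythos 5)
Your proof is correct and complete: well-definedness is exactly the point where ${\rm ker}\,\pi\subset{\rm ker}\,\nu$ is used, and the homomorphism property then follows by taking $\widetilde{x}\widetilde{y}$ as a lift of $xy$. The paper gives no proof of this lemma, deferring instead to a standard reference on combinatorial group theory (Johnson, Lemma~4.2.1); your argument is precisely the standard factorization-through-the-quotient argument that reference contains, so there is nothing to add.
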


\[
\xymatrix{
 F \ar@{->>}[d]_\pi \ar[dr]^{\nu } &  \\
\Gamma  \ar@{-->}[r]_{\nu ^\prime}  & \Gamma ^\prime  \\
}
\]

\begin{proof}[Proof of Theorem~\ref{main-thm}]
$\mathcal{M}(N_1)$ and $\mathcal{M}$ are trivial (see~\cite{Epstein}). Assume $g\geq 2$ and $n\in \{ 0,1\}$. Then we obtain Theorem~\ref{main-thm} if $\mathcal{M}(N_{g,n})$ is isomorphic to $G$. Let $\varphi :G\rightarrow \mathcal{M}(N_{g,n})$ be the surjective homomorphism defined by $\varphi (t_c):=t_c$ and $\varphi (Y_{\mu ,\alpha }):=Y_{\mu ,\alpha }$. 

Set $X_0:=\{ a_1,\dots ,a_{g-1},b , y\}\subset \mathcal{M}(N_{g,n})$ for $g\geq 4$ and $X_0:=\{ a_1,\dots ,a_{g-1}, y\}\subset \mathcal{M}(N_{g,n})$ for $g=2$, $3$. Let $F(X_0)$ be the free group which is freely generated by $X_0$ and let $\pi :F(X_0)\rightarrow \mathcal{M}(N_{g,n})$ be the natural projection (by Theorem~\ref{thm_Stukow}). We define the homomorphism $\nu :F(X_0)\rightarrow G$ by $\nu (a_i):=a_i$ for $i=1$, $\dots $, $g-1$, $\nu (b):=b$ and $\nu (y):=y$, and a map $\psi =\nu ^\prime :\mathcal{M}(N_{g,n})\rightarrow G$ by $\psi (a_i^{\pm 1}):=a_i^{\pm 1}$ for $i=1$, $\dots $, $g-1$, $\psi (b^{\pm 1}):=b^{\pm 1}$, $\psi (y^{\pm 1}):=y^{\pm 1}$ and $\psi (f):=\nu (\widetilde{f})$ for the other $f\in \mathcal{M}(N_{g,n})$, where $\widetilde{f}\in F(X_0)$ is a lift of $f$ with respect to $\pi $ (see the diagram below).

\[
\xymatrix{
 F(X_0) \ar@{->>}[d]_\pi \ar[dr]^{\nu } &  \\
\mathcal{M}(N_{g,n}) \ar@{-->}[r]_{\psi }  & G  \\
}
\]

If $\psi $ is a homomorphism, $\varphi \circ \psi ={\rm id}_{\mathcal{M}(N_{g,n})}$ by the definition of $\varphi $ and $\psi $. Thus it is sufficient for proving that $\psi $ is isomorphism to show that $\psi $ is a homomorphism and surjective.

\subsection{Proof that $\psi $ is a homomorphism}

$\mathcal{M}(N_1)$ and $\mathcal{M}(N_{1,1})$ are trivial (see \cite[Theorem~3.4]{Epstein}). For $(g,n)\in \{ (2,0),(2,1),(3,0)\}$, relations of the presentation in Theorem~\ref{thm_Stukow} are obtained from Relations~(0), (I), (I\hspace{-0.06cm}I), (I\hspace{-0.06cm}I\hspace{-0.06cm}I), (I\hspace{-0.04cm}V) and (V), clearly. Thus by Lemma~\ref{combinatorial}, $\psi $ is a homomorphism.

Assume $g\geq 4$ or $(g,n)=(3,1)$. By Lemma~\ref{combinatorial}, if the relations of the presentation in Theorem~\ref{thm_Stukow} are obtained from Relations~(0), (I), (I\hspace{-0.06cm}I), (I\hspace{-0.06cm}I\hspace{-0.06cm}I), (I\hspace{-0.04cm}V) and (V), then $\psi $ is a homomorphism. 

The group generated by $a_1$, $\dots $, $a_{g-1}$ and $b$ with Relations~(A1)-(A9b) as defining relations is isomorphic to $\mathcal{M}(\Sigma _{h,1})$ (resp. $\mathcal{M}(\Sigma _{h,2})$) for $g=2h+1$ (resp. $g=2h+2$) by Theorem~3.1 in \cite{Paris-Szepietowski}, and Relations~(A1)-(A9b) are relations on the mapping class group of the orientable subsurface $\mathcal{N}_{N_{g,n}}(\alpha _1\cup \cdots \cup \alpha _{g-1})$ of $N_{g,n}$. Hence Relations~(A1)-(A9b) are obtained from Relations~(0), (I), (I\hspace{-0.06cm}I) and (I\hspace{-0.06cm}I\hspace{-0.06cm}I) by Remark~\ref{orientable}.

Stukow~\cite{Stukow2} gave geometric interpretations for Relations~(B1)-(B8) in Section~4 in \cite{Stukow2}. By the interpretation, Relations~(B1), (B2), (B3), (B4), (B5), (B7) are obtained from Relations~(I) (use Lemma~\ref{braid3}), Relation~(B6) is obtained from Relations~(0), (I), (I\hspace{-0.06cm}I\hspace{-0.06cm}I), (I\hspace{-0.04cm}V) and (V) (use Lemma~\ref{pushing4} and Lemma~\ref{braid3}), and Relation~(B8) is obtained from Relations~(I), (I\hspace{-0.04cm}V) and (V) (use Lemma~\ref{braid3}). Thus $\psi $ is a homomorphism when $n=1$.

We assume $n=0$. By Remark~\ref{orientable}, $k$-chain relations are obtained from Relations~(0), (I), (I\hspace{-0.06cm}I) and (I\hspace{-0.06cm}I\hspace{-0.06cm}I) for each $k$. Relation~(C1) is interpreted in $G$ as follows.
\begin{eqnarray*}
(a_1a_2\cdots a_{g-1})^g \stackrel{\text{(0),(I),(I\hspace{-0.06cm}I),(I\hspace{-0.06cm}I\hspace{-0.06cm}I)}}{=}t_{\gamma _{1,2,\dots ,g}}t_{\gamma _{1,2,\dots ,g}}^{-1}=1.
\end{eqnarray*}
Thus Relation~(C1) is  obtained from Relations~(0), (I), (I\hspace{-0.06cm}I) and (I\hspace{-0.06cm}I\hspace{-0.06cm}I).


Relation~(C2) is obtained from Relations~(I) by Lemma~\ref{braid3}, clearly.

When $g$ is odd, by using the $(g-1)$-chain relation, Relation~(C3) is interpreted in $G$ as follows.
\begin{eqnarray*}
\rho ^2=(a_1a_2\cdots a_{g-1})^{2g} \stackrel{\text{(0),(I),(I\hspace{-0.06cm}I),(I\hspace{-0.06cm}I\hspace{-0.06cm}I)}}{=}t_{\partial \mathcal{N}_{N_g}(\gamma _{1,2,\dots ,g})}^\varepsilon \stackrel{(0)}{=}1,
\end{eqnarray*}
where $\varepsilon $ is $1$ or $-1$. Note that $\mathcal{N}_{N_g}(\gamma _{1,2,\dots ,g})$ is a M\"{o}bius band in $N_g$. Thus Relation~(C3) is  obtained from Relations~(0), (I), (I\hspace{-0.06cm}I) and (I\hspace{-0.06cm}I\hspace{-0.06cm}I) when $g$ is odd.

When $g$ is even, we rewrite the left-hand side $\rho ^2$ of Relation~(C3) by braid relations. Set $A:=a_2a_3\cdots a_{g-1}$. Note that 
\[
Y_{\mu _1,\gamma _{1,2,3}}A^2Y_{\mu _1,\gamma _{1,2,\dots ,2i-1}}A^{-2}=Y_{\mu _1,\gamma _{1,2,\dots ,2i+1}}
\]
for $i=2$, $\dots $, $\frac{g-2}{2}$  by Relation~(I), (I\hspace{-0.06cm}V) and then we have
\begin{eqnarray*}
& &\rho \\
&=& y^{-1}A(yAy^{-1}A)^{\frac{g-2}{2}}\\
&\stackrel{\text{(I)}}{=}& y^{-1}A(ya_2y^{-1}a_3\cdots a_{g-1}A)^{\frac{g-2}{2}}\\
&=& y^{-1}A(\underline{y(a_2y^{-1}a_2^{-1})}A^2)^{\frac{g-2}{2}}\\
&\stackrel{(\text{I}),(\text{I\hspace{-0.06cm}V})}{=}& y^{-1}A(Y_{\mu _1, \gamma _{1,2,3}} A^2)^{\frac{g-2}{2}}.\\
&=& y^{-1}AY_{\mu _1, \gamma _{1,2,3}}A^2\cdots Y_{\mu _1, \gamma _{1,2,3}}A^2Y_{\mu _1, \gamma _{1,2,3}}A^2Y_{\mu _1, \gamma _{1,2,3}}A^2\\
&=& y^{-1}AY_{\mu _1, \gamma _{1,2,3}}A^2\cdots Y_{\mu _1, \gamma _{1,2,3}}A^2\underline{Y_{\mu _1, \gamma _{1,2,3}}A^2Y_{\mu _1, \gamma _{1,2,3}}A^{-2}}A^4\\
&\stackrel{(\text{I}),(\text{I\hspace{-0.06cm}V})}{=}& y^{-1}AY_{\mu _1, \gamma _{1,2,3}}A^2\cdots Y_{\mu _1, \gamma _{1,2,3}}A^2Y_{\mu ,\gamma _{1,2,3,4,5}}A^4\\
&=& y^{-1}AY_{\mu _1, \gamma _{1,2,3}}A^2\cdots \underline{Y_{\mu _1, \gamma _{1,2,3}}A^2Y_{\mu ,\gamma _{1,2,3,4,5}}A^{-2}}A^6\\
&\stackrel{(\text{I}),(\text{I\hspace{-0.06cm}V})}{=}& y^{-1}AY_{\mu _1, \gamma _{1,2,3}}A^2\cdots Y_{\mu _1, \gamma _{1,2,3,4,5,6,7}}A^6\\
&\vdots & \\
&\stackrel{(\text{I}),(\text{I\hspace{-0.06cm}V})}{=}& y^{-1}AY_{\mu _1,\gamma _{1,2,\dots ,g-1}}A^{g-2}\\
&=& \underline{y^{-1}\cdot AY_{\mu _1,\gamma _{1,2,\dots ,g-1}}A^{-1}}\cdot A^{g-1}\\
&\stackrel{(\text{I}),(\text{I\hspace{-0.06cm}V})}{=}& Y_{\mu _1,\gamma _{1,2,\dots ,g}}A^{g-1}.
\end{eqnarray*}
Since $Y_{\mu _1,\gamma _{1,2,\dots ,g}}$ commutes with $a_i$ for $i=2$, $\dots $, $g-1$, and $\partial \mathcal{N}_{N_g}(\mu _1\cup \gamma _{1,2,\dots ,g})=\partial \mathcal{N}_{N_g}(\alpha _2\cup \dots \cup \alpha _{g-1})$ (see Figure~\ref{chain1_nonori}), we have
\begin{eqnarray*}
\rho ^2 &=& Y_{\mu _1,\gamma _{1,2,\dots ,g}}A^{g-1}Y_{\mu _1,\gamma _{1,2,\dots ,g}}A^{g-1}\\
&\stackrel{(\text{I})}{=}& Y_{\mu _1,\gamma _{1,2,\dots ,g}}^2A^{2g-2}\\
&\stackrel{(\text{0}),(\text{I}),(\text{I\hspace{-0.06cm}I}),(\text{I\hspace{-0.06cm}I\hspace{-0.06cm}I})}{=}& Y_{\mu _1,\gamma _{1,2,\dots ,g}}^2t_{\partial \mathcal{N}_{N_g}(\alpha _2\cup \dots \cup \alpha _{g-1})} \\
&\stackrel{\text{Lem.~\ref{pushing4}}}{=}&t_{\partial \mathcal{N}_{N_g}(\alpha _2\cup \dots \cup \alpha _{g-1})}^{-1}t_{\partial \mathcal{N}_{N_g}(\alpha _2\cup \dots \cup \alpha _{g-1})} \\
&=& 1.
\end{eqnarray*}
Recall that the relations in Lemma~\ref{pushing4} are obtained from Relations~(0), (I\hspace{-0.04cm}V) and (V). Thus Relation~(C3) is  obtained from Relations~(0), (I), (I\hspace{-0.06cm}I), (I\hspace{-0.06cm}I\hspace{-0.06cm}I), (I\hspace{-0.04cm}V) and (V) when $g$ is even.

\begin{figure}[h]
\includegraphics[scale=0.7]{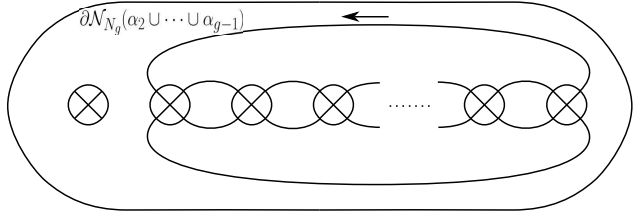}
\caption{Simple closed curve $\partial \mathcal{N}_{N_g}(\alpha _2\cup \dots \cup \alpha _{g-1})$ on $N_g$.}\label{chain1_nonori}
\end{figure}

Finally, we also rewrite the left-hand side $(y^{-1}a_2a_3\cdots a_{g-1}ya_2a_3\cdots a_{g-1})^{\frac{g-1}{2}}$ of Relation~(C4) by braid relations. Remark that $g$ is odd. For $1\leq i_1<i_2<\dots <i_k \leq g$, we denote by $\gamma _{i_1,i_2,\dots ,i_k}^\prime $ the simple closed curve on $N_{g,n}$ as in Figure~\ref{scc_gammaprime}. Note that 
\[
Y_{\mu _1,\gamma ^\prime _{1,2,3}}A^2Y_{\mu _1,\gamma ^\prime _{1,2,\dots ,2i-1}}A^{-2}=Y_{\mu _1,\gamma ^\prime _{1,2,\dots ,2i+1}}
\]
for $i=2$, $\dots $, $\frac{g-1}{2}$ and $\partial \mathcal{N}_{N_g}(\mu _1\cup \gamma _{1,2,\dots ,g})=d_1\sqcup d_2$ as in Figure~\ref{chain2_nonori}, and by a similar argument as for Relation~(C3) when $g$ is even, we have
\begin{eqnarray*}
& & (y^{-1}a_2a_3\cdots a_{g-1}ya_2a_3\cdots a_{g-1})^{\frac{g-1}{2}}\\
&=& (y^{-1}AyA)^{\frac{g-1}{2}}\\
&\stackrel{(\text{I})}{=}& (\underline{y^{-1}(a_2ya_2^{-1})}A^2)^{\frac{g-1}{2}}\\
&\stackrel{(\text{I}),(\text{I\hspace{-0.06cm}V})}{=}& (Y_{\mu _1,\gamma ^\prime _{1,2,3}}A^2)^{\frac{g-1}{2}}\\
&=& Y_{\mu _1,\gamma ^\prime _{1,2,3}}A^2\cdots Y_{\mu _1,\gamma ^\prime _{1,2,3}}A^2Y_{\mu _1,\gamma ^\prime _{1,2,3}}A^2Y_{\mu _1,\gamma ^\prime _{1,2,3}}A^2\\
&=& Y_{\mu _1,\gamma ^\prime _{1,2,3}}A^2\cdots Y_{\mu _1,\gamma ^\prime _{1,2,3}}A^2\underline{Y_{\mu _1,\gamma ^\prime _{1,2,3}}A^2Y_{\mu _1,\gamma ^\prime _{1,2,3}}A^{-2}}A^4\\
&\stackrel{(\text{I}),(\text{I\hspace{-0.06cm}V})}{=}& Y_{\mu _1,\gamma ^\prime _{1,2,3}}A^2\cdots Y_{\mu _1,\gamma ^\prime _{1,2,3}}A^2Y_{\mu _1,\gamma ^\prime _{1,2,3,4,5}}A^4\\
&=& Y_{\mu _1,\gamma ^\prime _{1,2,3}}A^2\cdots \underline{Y_{\mu _1,\gamma ^\prime _{1,2,3}}A^2Y_{\mu _1,\gamma ^\prime _{1,2,3,4,5}}A^{-2}}A^6\\
&\stackrel{(\text{I}),(\text{I\hspace{-0.06cm}V})}{=}& Y_{\mu _1,\gamma ^\prime _{1,2,3}}A^2\cdots Y_{\mu ,\gamma ^\prime _{1,2,3,4,5,6,7}}A^6\\
&\vdots &\\
&\stackrel{(\text{I}),(\text{I\hspace{-0.06cm}V})}{=}& Y_{\mu ,\gamma _{1,2,\dots ,g}}A^{g-1}\\
&\stackrel{\text{(0),(I),(I\hspace{-0.06cm}I),(I\hspace{-0.06cm}I\hspace{-0.06cm}I)}}{=}& Y_{\mu ,\gamma _{1,2,\dots ,g}}t_{d_1}t_{d_2}\\
&\stackrel{(\text{V})}{=}& t_{d_1}^{-1}t_{d_2}^{-1}t_{d_1}t_{d_2}\\
&\stackrel{(\text{I})}{=}& 1,
\end{eqnarray*}
where simple closed curves $d_1$ and $d_2$ are boundary components of $\mathcal{N}_{N_g}(\alpha _2\cup \cdots \cup \alpha _{g-1})$ as in Figure~\ref{chain2_nonori}. Therefore Relation~(C4) is obtained from Relations~(I), (I\hspace{-0.06cm}I), (I\hspace{-0.06cm}V) and (V), and $\psi :\mathcal{M}(N_{g,n})\rightarrow G$ is a homomorphism.

\begin{figure}[h]
\includegraphics[scale=0.7]{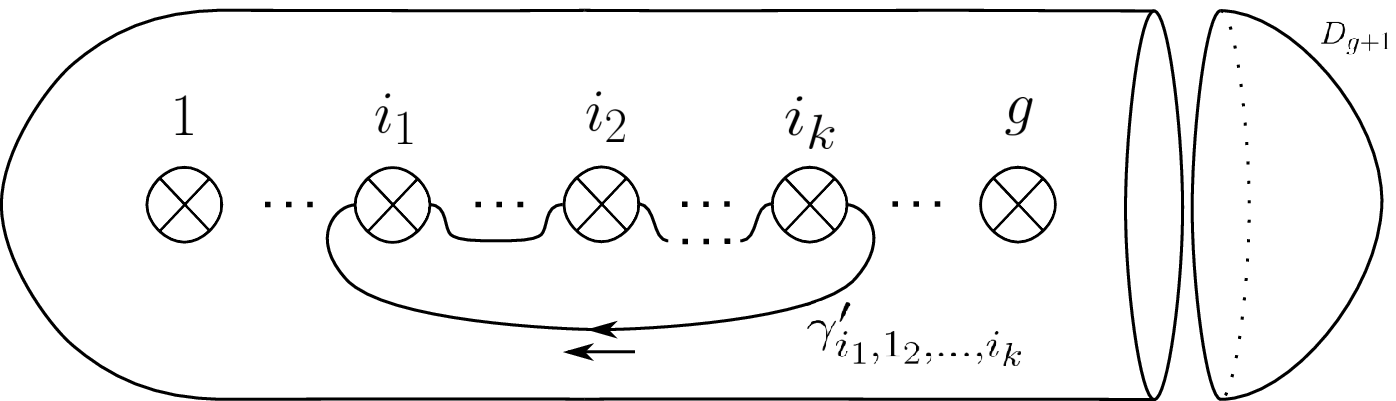}
\caption{Simple closed curve $\gamma ^\prime _{i_1,i_2,\dots ,i_k}$ on $N_{g,n}$.}\label{scc_gammaprime}
\end{figure}

\begin{figure}[h]
\includegraphics[scale=0.7]{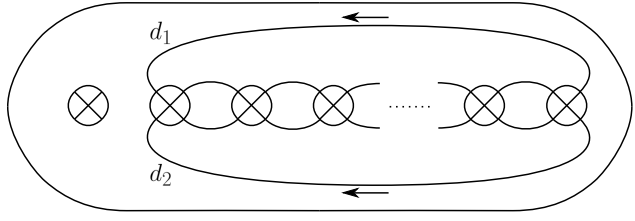}
\caption{Simple closed curve $d_1$ and $d_2$ on $N_{g,n}$.}\label{chain2_nonori}
\end{figure}

\subsection{Surjectivity of $\psi $}

We show that there exist lifts of $t_c$'s and $Y_{\mu ,\alpha }$'s with respect to $\psi $ for cases below, to prove the surjectivity of $\psi $.
\begin{itemize}
\item[(1)] $t_c$; $c$ is non-separating and $N_{g,n}-c$ is non-orientable,
\item[(2)] $t_c$; $c$ is non-separating and $N_{g,n}-c$ is orientable,
\item[(3)] $t_c$; $c$ is separating,
\item[(4)] $Y_{\mu ,\alpha }$; $\alpha $ is two-sided and $N_{g,n}-\alpha $ is non-orientable,
\item[(5)] $Y_{\mu ,\alpha }$; $\alpha $ is two-sided and $N_{g,n}-\alpha $ is orientable,
\item[(6)] $Y_{\mu ,\alpha }$; $\alpha $ is one-sided.
\end{itemize}

Set $X_0^\pm :=X_0\cup \{ x^{-1} \mid x\in X_0 \}$, and for a simple closed curve $c$ on $N_{g,n}$, we denote by $(N_{g,n})_c$ the surface obtained from $N_{g,n}$ by cutting $N_{g,n}$ along $c$. 

\textbf{Case~(1).} Since $(N_{g,n})_c$ is diffeomorphic to $N_{g-2,n+2}$ and $g\geq 3$, there exists a product $f=f_1f_2\cdots f_k\in \mathcal{M}(N_{g,n})$ of $f_1$, $f_2$, $\cdots $, $f_k \in X_0^\pm $ such that $f(\alpha _1)=c$. Note that $\psi (f_i)=f_i \in X^\pm \subset G$ for $i=1$, $2$, $\dots $, $k$. Thus we have 
\begin{eqnarray*}
\psi (fa_1f^{-1}) &=& \psi (f)\psi (a_1)\psi (f)^{-1}\\
&=& f_1f_2\cdots f_ka_1f_k^{-1}\cdots f_2^{-1}f_1^{-1}\\
&\stackrel{\text{Lem.~\ref{braid3}}}{=}& t_{f(\alpha _1)}^\varepsilon \\
&=& t_c^\varepsilon ,
\end{eqnarray*}
where $\varepsilon $ is $1$ or $-1$. Thus $fa_1^\varepsilon f^{-1}\in \mathcal{M}(N_{g,n})$ is a lift of $t_c\in G$ with respect to $\psi $ for some $\varepsilon \in \{ -1,1\}$.

\textbf{Case~(2).} We remark that $g$ is even in this case. When $g=2$, such a simple closed curve $c$ is unique and $c=\alpha _1$. Thus $a_1\in \mathcal{M}(N_{g,n})$ is the lift of $t_c\in G$ with respect to $\psi $. When $g=4$, since $(N_{g,n})_c$ is diffeomorphic to $\Sigma _{1,n+2}$, there exists a product $f=f_1f_2\cdots f_k\in \mathcal{M}(N_{g,n})$ of $f_1$, $f_2$, $\cdots $, $f_k \in X_0^\pm $ such that $f(\beta )=c$. By a similar argument as in Case~(1), $fb^\varepsilon f^{-1}\in \mathcal{M}(N_{g,n})$ is a lift of $t_c\in G$ with respect to $\psi $ for some $\varepsilon \in \{ -1,1\}$.

Assume $g\geq 6$ even. Then there exists a product $f=f_1f_2\cdots f_k\in \mathcal{M}(N_{g,n})$ of $f_1$, $f_2$, $\cdots $, $f_k \in X_0^\pm $ such that $f(\gamma _{1,2,\dots ,g})=c$. Since $\alpha _1\cup \alpha _3\cup \gamma _{5,6,\dots ,g}\cup \gamma _{1,2,\dots ,g}$ bounds a subsurface of $N_{g,n}$ which is diffeomorphic to $\Sigma _{0,4}$ (see Figure~\ref{lantern2}), we have $bt_{\gamma _{3,4,\dots ,g}}t_{\gamma _{1,2,5,\dots ,g}}=t_{\gamma _{1,2,\dots ,g}}a_1a_3t_{\gamma _{5,6,\dots ,g}}$ by a lantern relation. Note that $b$, $t_{\gamma _{3,4,\dots ,g}}$, $t_{\gamma _{1,2,5,\dots ,g}}$, $a_1$, $a_3$, $t_{\gamma _{5,6,\dots ,g}}$ are Dehn twists of type~(1), and $t_{\gamma _{3,4,\dots ,g}}$, $t_{\gamma _{1,2,5,\dots ,g}}$, $t_{\gamma _{5,6,\dots ,g}}\in G$ have lifts $h_1$, $h_2$, $h_3\in \mathcal{M}(N_{g,n})$ with respect to $\psi $, respectively. Thus we have
\begin{eqnarray*}
& & \psi (fbh_1h_2a_1^{-1}a_3^{-1}h_3^{-1}f^{-1})\\
&=& f_1 f_2\cdots f_kbt_{\gamma _{3,4,\dots ,g}}t_{\gamma _{1,2,5,\dots ,g}}a_1^{-1}a_3^{-1}t_{\gamma _{5,6,\dots ,g}}^{-1}f_k^{-1}\cdots f_2^{-1}f_1^{-1}\\
&\stackrel{(\text{I\hspace{-0.06cm}I\hspace{-0.06cm}I})}{=}& f_1f_2\cdots f_kt_{\gamma _{1,2,\dots ,g}}f_k^{-1}\cdots f_2^{-1}f_1^{-1}\\
&\stackrel{\text{Lem.~\ref{braid3}}}{=}& t_c^\varepsilon ,
\end{eqnarray*}
where $\varepsilon $ is $1$ or $-1$. Thus $f(bh_1h_2a_1^{-1}a_3^{-1}h_3^{-1})^\varepsilon f^{-1}\in \mathcal{M}(N_{g,n})$ is a lift of $t_c\in G$ with respect to $\psi $ for some $\{ -1,1\}$.

\begin{figure}[h]
\includegraphics[scale=0.7]{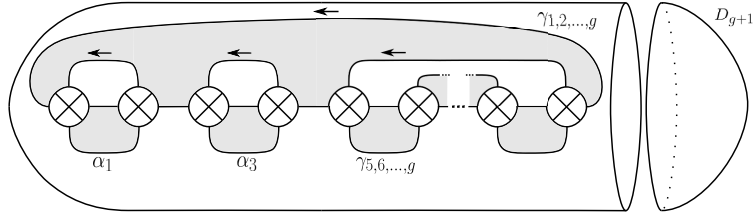}
\caption{$\alpha _1\cup \alpha _3\cup \gamma _{5,6,\dots ,g}\cup \gamma _{1,2,\dots ,g}$ bound a subsurface of $N_{g,n}$ which is diffeomorphic to $\Sigma _{0,4}$.}\label{lantern2}
\end{figure}

\textbf{Case~(4).} Since $N_{g,n}-{\rm int}\mathcal{N}_{N_{g,n}}(\mu \cup \alpha )$ is diffeomorphic to $N_{g-2,n+1}$ and the two-sided simple closed curve on $N_{2,1}$ is unique, there exists a product $f=f_1f_2\cdots f_k\in \mathcal{M}(N_{g,n})$ of $f_1$, $f_2$, $\cdots $, $f_k \in X_0^\pm $ such that $f(\alpha _1)=\alpha $ and $f(\mu _1)=\mu $. Thus we have
\begin{eqnarray*}
\psi (fyf^{-1}) &=& f_1f_2\cdots f_kyf_k^{-1}\cdots f_2^{-1}f_1^{-1}\\
&\stackrel{\text{Lem.~\ref{braid3}}}{=}& Y_{\mu ,\alpha }^\varepsilon ,
\end{eqnarray*}
where $\varepsilon $ is $1$ or $-1$. Thus $fy^\varepsilon f^{-1}\in \mathcal{M}(N_{g,n})$ is a lift of $Y_{\mu ,\alpha }\in G$ with respect to $\psi $ for some $\varepsilon \in \{ -1,1\}$.

\textbf{Case~(5).} We remark that $g$ is even in this case. Since $N_{g,n}-{\rm int}\mathcal{N}_{N_{g,n}}(\mu \cup \alpha )$ is diffeomorphic to $\Sigma _{\frac{g-2}{2},n+1}$ and the two-sided simple closed curve on $N_{2,1}$ is unique, there exists a product $f=f_1f_2\cdots f_k\in \mathcal{M}(N_{g,n})$ of $f_1$, $f_2$, $\cdots $, $f_k \in X_0^\pm $ such that $f(\gamma _{1,2,\dots ,g})=\alpha $ and $f(\mu _1)=\mu $. Note that $Y_{\mu _1,\gamma _{1,2}}$, $Y_{\mu _1,\gamma _{1,3}}$, $\dots $, $Y_{\mu _1,\gamma _{1,g}}$ are Y-homeomorphisms of type~(4), and $Y_{\mu _1,\gamma _{1,3}}$, $Y_{\mu _1,\gamma _{1,4}}$, $\dots $, $Y_{\mu _1,\gamma _{1,g}}\in G$ have lifts  $h_3$, $h_4$, $\dots $, $h_g\in \mathcal{M}(N_{g,n})$ with respect to $\psi $, respectively. Thus we have
\begin{eqnarray*}
& & \psi (fh_g\dots h_4h_3yf^{-1})\\
&=& f_1f_2\cdots f_kY_{\mu _1,\gamma _{1,g}}\dots Y_{\mu _1,\gamma _{1,4}}Y_{\mu _1,\gamma _{1,3}}yf_k^{-1}\cdots f_2^{-1}f_1^{-1}\\
&\stackrel{(\text{I\hspace{-0.06cm}V})}{=}& f_1f_2\cdots f_kY_{\mu _1,\gamma _{1,2,\dots ,g}}f_k^{-1}\cdots f_2^{-1}f_1^{-1}\\
&\stackrel{\text{Lem.~\ref{braid3}}}{=}& Y_{\mu ,\alpha }^\varepsilon ,
\end{eqnarray*}
where $\varepsilon $ is $1$ or $-1$. Thus $f(h_g\dots h_4h_3y)^\varepsilon f^{-1}\in \mathcal{M}(N_{g,n})$ is a lift of $Y_{\mu ,\alpha }\in G$ with respect to $\psi $ for some $\varepsilon \in \{ -1,1\}$.

\textbf{Case~(3).} Let $\Sigma $ be the component of $(N_{g,n})_c$ which has one boundary component. When $\Sigma $ is orientable, there exists a $k$-chain $c_1$, $c_2$, $\dots $, $c_k$ on $N_{g,n}$ such that $\mathcal{N}_{N_{g,n}}(c_1\cup c_2\cup \cdots \cup c_k)=\Sigma $. By the chain relation, $(t_{c_1}^{\varepsilon _1}t_{c_2}^{\varepsilon _2}\cdots t_{c_k}^{\varepsilon _k})^{2k+2}=t_c$ for some $\varepsilon _1$, $\varepsilon _2$, $\dots $, $\varepsilon _k\in \{ -1,1\}$.  Note that $t_{c_1}$, $t_{c_2}$, $\dots $, $t_{c_k}$ are Dehn twists of type~(1) and $t_{c_1}$, $t_{c_2}$, $\dots $, $t_{c_k}\in G$ have lifts $h_1$, $h_2$, $\dots $, $h_k\in \mathcal{M}(N_{g,n})$ with respect to $\psi $, respectively. Thus we have
\begin{eqnarray*}
\psi ((h_1^{\varepsilon _1}h_2^{\varepsilon _2}\dots h_k^{\varepsilon _k})^{2k+2})&=& (t_{c_1}^{\varepsilon _1}t_{c_2}^{\varepsilon _2}\cdots t_{c_k}^{\varepsilon _k})^{2k+2}\\
&\stackrel{(0),(\text{I}),(\text{I\hspace{-0.06cm}I}),(\text{I\hspace{-0.06cm}I\hspace{-0.06cm}I})}{=}& t_c.
\end{eqnarray*}
Thus $(h_1^{\varepsilon _1}h_2^{\varepsilon _2}\dots h_k^{\varepsilon _k})^{2k+2}\in \mathcal{M}(N_{g,n})$ is a lift of $t_c\in G$ with respect to $\psi $.

When $\Sigma $ is non-orientable, we proceed by induction on the genus $g^\prime $ of $\Sigma $. For $g^\prime =1$, $t_c=1$ by Relation~(0). 
When $g^\prime =2$ and $N_{g,n}-\Sigma $ is non-orientable, there exists a product $f=f_1f_2\cdots f_k\in \mathcal{M}(N_{g,n})$ of $f_1$, $f_2$, $\cdots $, $f_k \in X_0^\pm $ such that $f(\partial \mathcal{N}_{N_{g,n}}(\mu _1\cup \alpha _1))=c$. Hence $fy^2f^{-1}=t_c^\varepsilon $ for some $\varepsilon \in \{ -1,1\}$. Then we have
\begin{eqnarray*}
\psi (fy^2f^{-1})&=& f_1f_2\cdots f_k\ y^2f_k^{-1}\cdots f_2^{-1}f_1^{-1}\\
&\stackrel{\text{Lem.}~\ref{pushing4}}{=}& f_1f_2\cdots f_kt_{\partial \mathcal{N}_{N_{g,n}}(\mu _1\cup \alpha _1)}^{\varepsilon ^\prime }f_k^{-1}\cdots f_2^{-1}f_1^{-1}\\
&\stackrel{\text{Lem.~\ref{braid3}}}{=}& t_c^{\varepsilon },
\end{eqnarray*}
where $\varepsilon ^\prime $ is $1$ or $-1$. Thus $fy^{2\varepsilon }f^{-1}\in \mathcal{M}(N_{g,n})$ is a lift of $t_c\in G$ with respect to $\psi $. When $g^\prime =2$ and $N_{g,n}-\Sigma $ is orientable, $g$ is even and there exists a product $f=f_1f_2\cdots f_k\in \mathcal{M}(N_{g,n})$ of $f_1$, $f_2$, $\cdots $, $f_k \in X_0^\pm $ such that $f(\partial \mathcal{N}_{N_{g,n}}(\mu _1\cup \gamma _{1,2,\dots ,g}))=c$. Hence $fY_{\mu _1,\gamma _{1,2,\dots ,g}}^2f^{-1}=t_c^\varepsilon $ for some $\varepsilon \in \{ -1,1\}$. Since $Y_{\mu _1,\gamma _{1,2,\dots ,g}}$ is a Y-homeomorphism of type~(5), there exists a lift $h\in \mathcal{M}(N_{g,n})$ of $Y_{\mu _1,\gamma _{1,2,\dots ,g}}\in G$ with respect to $\psi $. By a similar argument above, $fh^{2\varepsilon }f^{-1}\in \mathcal{M}(N_{g,n})$ is a lift of $t_c\in G$ with respect to $\psi $.

Suppose $g^\prime \geq 3$. We take a diffeomorphism $f:\Sigma \rightarrow N_{g^\prime ,1}$ and simple closed curves $c_1, c_2, \dots , c_6$ and $c^\prime :=\partial N_{g^\prime ,1}=f(c)$ on $N_{g^\prime ,1}$ as in Figure~\ref{lantern4}. Note that $c^\prime \cup c_4 \cup c_5 \cup c_6$ bounds a subsurface of $N_{g^\prime ,1}$ which is diffeomorphic to $\Sigma _{0,4}$ and we have $t_{f^{-1}(c_1)}^{\varepsilon _1}t_{f^{-1}(c_2)}^{\varepsilon _2}t_{f^{-1}(c_3)}^{\varepsilon _3}t_{f^{-1}(c_4)}^{\varepsilon _4}=t_c^\varepsilon \in G$ for some $\varepsilon _1,\dots , \varepsilon _4, \varepsilon \in \{ -1,1\}$ by Relations~(0) and (I\hspace{-0.06cm}I\hspace{-0.06cm}I). Since each $c_i$ for $i=1$, $2$, $\dots $, $6$ bounds a subsurface of $N_{g,n}$ which is diffeomorphic to a non-orientable surface of genus $g_i<g^\prime $ with one boundary component and the complement of the subsurface is non-orientable, each $f^{-1}(c_i)$ $(i=1,2,\dots ,6)$ satisfies the inductive assumption. Hence $t_{f^{-1}(c_1)}$, $t_{f^{-1}(c_2)}$, $t_{f^{-1}(c_3)}$, $t_{f^{-1}(c_4)}\in G$ have lifts $h_1$, $h_2$, $h_3$ and $h_4\in \mathcal{M}(N_{g,n})$ with respect to $\psi $, respectively. Thus we have 
\begin{eqnarray*}
\psi (h_1^{\varepsilon _1}h_2^{\varepsilon _2}h_3^{\varepsilon _3}h_4^{\varepsilon _4})&=& t_{f^{-1}(c_1)}^{\varepsilon _1}t_{f^{-1}(c_2)}^{\varepsilon _2}t_{f^{-1}(c_3)}^{\varepsilon _3}t_{f^{-1}(c_4)}^{\varepsilon _4}\\
&\stackrel{(0),(\text{I\hspace{-0.06cm}I\hspace{-0.06cm}I})}{=}& t_c^\varepsilon .\\
\end{eqnarray*}
Thus $h_1^{\varepsilon _1}h_2^{\varepsilon _2}h_3^{\varepsilon _3}h_4^{\varepsilon _4}\in \mathcal{M}(N_{g,n})$ is a lift of $t_c\in G$ with respect to $\psi $.

\begin{figure}[h]
\includegraphics[scale=0.7]{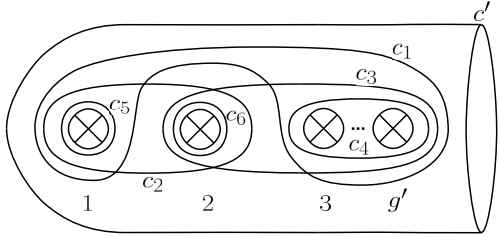}
\caption{Simple closed curves $c_1, c_2, \dots , c_6$ and $c^\prime $ on $N_{g^\prime ,1}$.}\label{lantern4}
\end{figure}


\textbf{Case~(6).} Let $\delta _1$, $\delta _2$ be two-sided simple closed curves on $N_{g,n}$ such that $\delta _1\sqcup \delta _2=\partial \mathcal{N}_{N_{g,n}}(\mu \cap \alpha )$. By Lemma~\ref{pushing2}, we have $Y_{\mu ,\alpha }=t_{\delta _1}^{\varepsilon _1}t_{\delta _2}^{\varepsilon _2}$ for some $\varepsilon _1$ and $\varepsilon _2\in \{ -1,1\}$, and by above arguments, $t_{c_1}$, $t_{c_2}\in G$ have lifts $h_1$ and $h_2\in \mathcal{M}(N_{g,n})$ with respect to $\psi $, respectively. Thus we have 
\begin{eqnarray*}
\psi (h_1^{\varepsilon _1}h_2^{\varepsilon _2})&=& t_{c_1}^{\varepsilon _1}t_{c_2}^{\varepsilon _2}\\
&\stackrel{(\text{V})}{=}& Y_{\mu ,\alpha }.
\end{eqnarray*}
Thus $h_1^{\varepsilon _1}h_2^{\varepsilon _2}\in \mathcal{M}(N_{g,n})$ is a lift of $Y_{\mu ,\alpha }\in G$ with respect to $\psi $ and $\psi :\mathcal{M}(N_{g,n})\rightarrow G$ is surjective. We have completed the proof of Theorem~\ref{main-thm}.

\end{proof}

\par
{\bf Acknowledgement: } The author would like to express his gratitude to Hisaaki Endo, for his encouragement and helpful advices. The author also wish to thank Susumu Hirose for his comments and helpful advices. The author was supported by JSPS KAKENHI Grant number 15J10066.

\end{document}